\documentclass[a4paper,11pt,intlimits,oneside]{amsart}

\usepackage{enumerate, color}
\usepackage{amsfonts,amsmath}
\usepackage{esint}

\usepackage{latexsym,amssymb}

\usepackage{mathrsfs}

\allowdisplaybreaks

\newcommand{\comment}[1]{}

\newcommand{\R}{{\mathbb R}}

\newcommand{\f}{\frac}
\newcommand{\vc}{\infty}

\def\X{{\mathbb R^n_+}}

\def\sign{{\mbox{\rm  sign}}\,}

\newcommand{\ackname}{Acknowledgements}
\makeatletter
\if@titlepage
\newenvironment{acknowledgement}{%
	\titlepage
	\null\vfil
	\@beginparpenalty\@lowpenalty
	\begin{center}%
		\bfseries \ackname
		\@endparpenalty\@M
\end{center}}%
{\par\vfil\null\endtitlepage}
\else
\newenvironment{acknowledgement}{%
	\if@twocolumn
	\section*{\ackname}%
	\else
	\begin{center}%
		{\bfseries \ackname\vspace{0em}\vspace{\z@}}%
	\end{center}%
	\quotation
	\fi}
\fi
\makeatother

\begin{document}
	
	\title[Commutators of the Dirichlet Riesz Transform on the Half-Space]{Endpoint Estimates for Commutators of the Dirichlet Riesz Transform on the Half-Space}
	
	\author{The Anh Bui}  
	\address{Department of Mathematics, Macquarie University, NSW 2109, Australia} 
	\email{{\tt the.bui@mq.edu.au}}
	\author{Xuan Thinh Duong}  
	\address{Department of Mathematics, Macquarie University, NSW 2109, Australia} 
	\email{{\tt xuan.duong@mq.edu.au}}
	\author{Xuejing Huo}  
	\address{Department of Mathematics, Macquarie University, NSW 2109, Australia} 
	\email{{\tt xuejing.huo@hdr.mq.edu.au}}
	\author{Luong Dang Ky$^*$} 
	\address{Department of Education, Quy Nhon University, 170 An Duong Vuong, South Quy Nhon, Gia Lai, Vietnam} 
	\email{{\tt luongdangky@qnu.edu.vn}}

	\keywords{Dirichlet Laplacian on the half-space, Riesz transform, commutators, Hardy space of restriction, BMO-type spaces.}
	\subjclass[2020]{42B20, 42B30, 42B35}
	\thanks{$^*$Corresponding author}

	\begin{abstract}
Let $\mathfrak R=\nabla(-\Delta_D)^{-1/2}$ be the Riesz transform associated with the Dirichlet Laplacian on the upper half-space $\mathbb R^n_+$, and let $H^1_r(\mathbb R^n_+)$ denote the Hardy space of restrictions. We establish an endpoint theory for commutators with $\mathfrak R$ on $H^1_r(\mathbb R^n_+)$. For every $b\in \mathrm{BMO}(\mathbb R^n_+)$, the commutator $[b,\mathfrak R]$ extends boundedly from $H^1_r(\mathbb R^n_+)$ to $L^{1,\infty}(\mathbb R^n_+)$. We then prove the sharp strong-endpoint characterization
\[
[b,\mathfrak R]:H^1_r(\mathbb R^n_+)\to H^1_r(\mathbb R^n_+) \quad\Longleftrightarrow\quad b\in \mathrm{BMO}^{\log}(\mathbb R^n_+).
\]
Moreover, the $\mathrm{BMO}^{\log}$ norm is quantitatively equivalent to the sum of the usual $\mathrm{BMO}$ norm and the $H^1_r$ operator norms of the component commutators $[b,\mathfrak R_j]$. The logarithmic condition records the distance to the boundary and is therefore intrinsic to the Dirichlet geometry.
\end{abstract}

	
	\maketitle
	\newtheorem{theorem}{Theorem}[section]
	\newtheorem{lemma}{Lemma}[section]
	\newtheorem{proposition}{Proposition}[section]
	\newtheorem{remark}{Remark}[section]
	\newtheorem{corollary}{Corollary}[section]
	\newtheorem{definition}{Definition}[section]
	\newtheorem{example}{Example}[section]
	\numberwithin{equation}{section}
	\newtheorem*{theoremjj}{Theorem J-J}
	\newtheorem*{theorema}{Theorem A}
	\newtheorem*{theoremb}{Theorem B}
	\newtheorem*{theoremc}{Theorem C}
	\newtheorem*{conjecture}{Conjecture}
	\newtheorem*{open question}{Open question}

	\section{Introduction and statement of the results}

Let $-\Delta_D$ be the Dirichlet Laplacian on the upper half-space
\[
\mathbb R^n_+:=\{x=(x',x^n)\in\mathbb R^n:x^n>0\}.
\]
Its heat semigroup admits the kernel representation
\[
p_t(x,y)=\frac{1}{(4\pi t)^{n/2}}e^{-\frac{|x'-y'|^2}{4t}}
\left(e^{-\frac{|x^n-y^n|^2}{4t}}-e^{-\frac{|x^n+y^n|^2}{4t}}\right),
\qquad x,y\in\mathbb R^n_+,\, t>0.
\]
The second term in this formula is the reflected contribution that enforces the Dirichlet boundary condition. We write
\[
\mathfrak R=(\mathfrak R_1,\ldots,\mathfrak R_n)
=\nabla(-\Delta_D)^{-1/2},
\qquad
\mathfrak R_j=\partial_j(-\Delta_D)^{-1/2},\quad j=1,\ldots,n,
\]
for the corresponding Dirichlet Riesz transform.

For a locally integrable function $b$ and a linear operator $T$, the commutator of $b$ and $T$ is defined by
\[
[b,T]f=bTf-T(bf).
\]
For the vector Riesz transform, we write
\[
[b,\mathfrak R]f:=\big([b,\mathfrak R_1]f,\ldots,[b,\mathfrak R_n]f\big).
\]
Vector-valued $H^1_r$ and $L^{1,\infty}$ norms are understood componentwise, with the sum of the component norms. Since the dimension is fixed, any equivalent finite-dimensional choice leads to the same boundedness statements.
The boundedness of commutators of singular integral operators is a central theme in harmonic analysis. The foundational theorem of Coifman, Rochberg, and Weiss \cite{CRW} states that, whenever $T$ is a Calder\'on--Zygmund operator and $b\in\mathrm{BMO}(\mathbb R^n)$, the commutator $[b,T]$ is bounded on $L^p(\mathbb R^n)$ for every $1<p<\infty$. Thus, BMO is the natural symbol class for the $L^p$ theory of commutators.

The endpoint $p=1$ is more subtle. In general, $[b,T]$ need not map $H^1(\mathbb R^n)$ boundedly into $L^1(\mathbb R^n)$. One classical approach is to associate with each $b\in\mathrm{BMO}$ a maximal subspace $H^1_b\subset H^1(\mathbb R^n)$ on which the commutator has an $L^1$ endpoint bound; see, for instance, \cite{Pe,Ky13}. Since this space depends on the symbol $b$, this approach does not provide a fixed Hardy-space endpoint for the whole BMO class. Endpoint commutator estimates in operator-adapted settings have also been studied for singular integrals related to Schr\"odinger operators; see \cite{Ky15}. We also refer to \cite{Ky25} for generalized Calder\'on--Zygmund operators on the restriction Hardy space.

The purpose of this paper is to study the corresponding endpoint problem for $\mathfrak R$ on $\mathbb R^n_+$. The operator $\mathfrak R$ is a Calder\'on--Zygmund operator in the present setting, so the usual $L^p$ commutator bounds hold for $1<p<\infty$. Our focus is instead the fixed Hardy space of restrictions $H^1_r(\mathbb R^n_+)$, consisting of restrictions to $\mathbb R^n_+$ of functions in $H^1(\mathbb R^n)$. The key point is that the cancellation condition for atoms in $H^1_r(\mathbb R^n_+)$ depends on their position relative to the boundary. This boundary dependence is reflected by the logarithmic BMO space $\mathrm{BMO}^{\log}(\mathbb R^n_+)$ defined in \eqref{eq-bmolog-definition}, whose norm involves the factor
\[
\log\Big(e+\frac{x_B^n}{r_B}\Big).
\]
For balls at the boundary scale this factor is bounded, whereas for balls lying far from the boundary relative to their radius it becomes large. Thus $\mathrm{BMO}^{\log}$ imposes precisely the additional decay of mean oscillation that is needed away from the boundary.

Our main theorem shows that this logarithmic condition is not merely sufficient: it is exactly the symbol condition for strong boundedness of the commutator on $H^1_r(\mathbb R^n_+)$. At the same time, every BMO symbol still gives the natural weak endpoint estimate. In this sense, the theorem separates sharply the weak and strong endpoint theories. The necessity direction is genuinely boundary-sensitive: it combines a Riesz-transform characterization of $H^1_r$ with a new atomic characterization of $\mathrm{BMO}^{\log}$, rather than following solely from the usual Calder\'on--Zygmund commutator theory.

The main result is the following.

\begin{theorem}\label{mainthm}
\begin{enumerate}[{\rm (a)}]
\item If $b\in\mathrm{BMO}(\X)$, then $[b,\mathfrak R]$ extends to a bounded operator from $H^1_r(\R_+^n)$ into $L^{1,\vc}(\R_+^n)$.

\item Let $b\in\mathrm{BMO}(\X)$. Then $[b,\mathfrak R]$ is bounded on $H^1_r(\X)$ if and only if $b\in\mathrm{BMO}^{\log}(\X)$. Moreover, there is a constant $C>1$, independent of $b$, such that
\[
C^{-1}\|b\|_{\mathrm{BMO}^{\log}(\X)}
\leq \|b\|_{\mathrm{BMO}(\X)}+\sum_{j=1}^{n}\|[b,\mathfrak R_j]\|_{H^1_r\to H^1_r}
\leq C\|b\|_{\mathrm{BMO}^{\log}(\X)}.
\]
\end{enumerate}
\end{theorem}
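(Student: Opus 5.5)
We prove (a) and (b) separately. Both rest on the atomic structure of $H^1_r(\X)$. An $H^1_r$ atom is either a ball atom $a$ supported in a ball $B\subset\X$ with $2B$ (say) touching or nearly touching the boundary, needing no cancellation; or a ball atom with $B$ well inside $\X$ (i.e.\ $x_B^n\gtrsim r_B$) and $\int a=0$. We also use the kernel bounds of $\mathfrak R$: its kernel $\mathfrak R(x,y)$ is a Calder\'on--Zygmund kernel. It has the extra decay $|\mathfrak R(x,y)|\lesssim \frac{y^n}{|x-y|^{n+1}}$ (and similarly in $x^n$) coming from the reflected term.

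\textbf{Part (a).} We use the decomposition
\[
[b,\mathfrak R]a=(b-b_B)\mathfrak R a-\mathfrak R((b-b_B)a).
\]
The second term maps to $L^{1,\infty}$ with norm $\lesssim\|b\|_{\mathrm{BMO}}$, since $\mathfrak R$ is of weak type $(1,1)$ and $\|(b-b_B)a\|_{L^1}\lesssim\|b\|_{\mathrm{BMO}}$.

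For the first term, we estimate $\int |b-b_B||\mathfrak Ra|$ on $4B$ via $L^2$ and John--Nirenberg. Off $4B$ there are two cases.
\begin{itemize}
\item For cancellative atoms, the H\"older regularity gives $|\mathfrak R a(x)|\lesssim r_B^\delta|x-x_B|^{-n-\delta}$, and the sum over dyadic annuli $2^{k}B$ converges against $|b-b_B|\lesssim k\|b\|_{\mathrm{BMO}}$ on average.
\item For boundary atoms, the gain comes from the Dirichlet decay $y^n/|x-y|^{n+1}\lesssim r_B/|x-x_B|^{n+1}$, since $y^n\lesssim r_B$.
\end{itemize}
Hence $\|[b,\mathfrak R]a\|_{L^{1,\infty}}\lesssim\|b\|_{\mathrm{BMO}}$ uniformly over atoms. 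Passing from atoms to general $f$ requires care because $L^{1,\infty}$ is not Banach. We handle this via the standard route: the estimate is first proved for finite atomic combinations, using the $L^2\cap H^1_r$ dense class and $L^2$-boundedness of the commutator to identify the limit. Alternatively we use a decomposition $[b,\mathfrak R]f=\sum\lambda_j(b-b_{B_j})\mathfrak Ra_j-\mathfrak R(\sum\lambda_j(b-b_{B_j})a_j)$, where the first sum is in $L^1$ and the second is handled by weak $(1,1)$.

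\textbf{Part (b), sufficiency.} We show $\|[b,\mathfrak R_j]a\|_{H^1_r}\lesssim\|b\|_{\mathrm{BMO}^{\log}}$ for each atom. We use the Riesz-transform characterization of $H^1_r$: $g\in H^1_r$ iff $g,\ \mathfrak R g\in L^1$, presumably proved earlier for the Dirichlet Riesz transform. Write $[b,\mathfrak R_j]a=(b-b_B)\mathfrak R_ja-\mathfrak R_j((b-b_B)a)$. The main term is a molecule-type function. Its failure to be an atom is measured by $\int (b-b_B)\mathfrak R_j a$ over the region relevant to $B$. For an interior atom, the annular sum above produces a tail of size up to about $\log(x_B^n/r_B)$ before the boundary cuts it off, which is controlled precisely by the logarithmic factor. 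Concretely, one decomposes the output into $\lesssim\log(e+x_B^n/r_B)$ pieces at scales $2^kr_B$ up to $x_B^n$. Each piece is an $L^2$ molecule with mean zero up to a boundary-scale remainder, which itself is an allowed non-cancellative atom. The constant is $\|b\|_{\mathrm{BMO}}\log(e+x_B^n/r_B)$ times oscillation; this is absorbed by the $\mathrm{BMO}^{\log}$ norm of $b$ on $B$.

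\textbf{Part (b), necessity and norm equivalence.} This is the main obstacle. We test the commutator on atoms adapted to a given ball $B$ with $x_B^n\gg r_B$. We use the atomic characterization of $\mathrm{BMO}^{\log}$, presumably equivalent to duality-type quantities $\sup|\int b\,\phi|$ over suitable products or atoms. The idea is the Uchiyama/Janson-type median construction: choose $a$ supported in $B$ with $\int a=0$ and $\int (b-b_B)a\approx |B|^{-1}\int_B|b-b_B|$. Then $[b,\mathfrak R_j]a$ has nonzero integral behaviour far away, with $\mathfrak R_j((b-b_B)a)$ behaving like $c\,K_j(x,x_B)$ for $|x-x_B|\in(r_B,x_B^n)$. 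The $L^1$ norm of this region is at least $|c|\log(x_B^n/r_B)$, since a Riesz kernel restricted to annuli up to the boundary scale has log-divergent $L^1$ mass. Combined with the trivial bound on the $(b-b_B)\mathfrak R_ja$ term, this gives
\[
\log(e+x_B^n/r_B)\,\frac{1}{|B|}\int_B|b-b_B|\lesssim \|b\|_{\mathrm{BMO}}+\sum_j\|[b,\mathfrak R_j]\|_{H^1_r\to H^1_r}.
\]
Summing over $j$ (not all components of the kernel can vanish simultaneously) and taking the supremum gives the lower bound.
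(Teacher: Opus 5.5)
Your part (a) is fine. The route you list as the alternative is exactly the paper's proof: split $[b,\mathfrak R]f=\sum_j\lambda_j(b-b_{B_j})\mathfrak Ra_j-\mathfrak R\big(\sum_j\lambda_j(b-b_{B_j})a_j\big)$ for finite atomic sums, then use weak type $(1,1)$ plus off-diagonal bounds (cancellation, or the boundary decay $y^n/|x-y|^{n+1}$ for non-cancellative atoms).

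The gap is in the sufficiency half of (b). You write $[b,\mathfrak R_j]a=(b-b_B)\mathfrak R_ja-\mathfrak R_j((b-b_B)a)$ and then discuss only the first term. The second term is never shown to lie in $H^1_r$, and it is not harmless. For $b$ merely in BMO it is not uniformly bounded even in $L^1$, as your own necessity argument shows: its $L^1$ mass on $2r_B<|x-x_B|<x_B^n/2$ is about $|\int(b-b_B)a|\log(x_B^n/r_B)$. What is missing is that $(b-b_B)a$ is a bounded multiple of a log-type atom. It is supported in $B$, has the correct $L^s$ size by John--Nirenberg, and
\[
\Big|\int(b-b_B)a\Big|\le\|a\|_{L^q}\|b-b_B\|_{L^{q'}(B)}\lesssim\frac{\|b\|_{\mathrm{BMO}^{\log}}}{\log(e+x_B^n/r_B)}.
\]
Your telescoping chain to the boundary (or the $\mathrm{VMO}_z$ duality of Lemma \ref{lem-log-molecule}) then gives $\|(b-b_B)a\|_{H^1_r}\lesssim\|b\|_{\mathrm{BMO}^{\log}}$. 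The $H^1_r$-boundedness of $\mathfrak R_j$ (Proposition \ref{prop-R-Hr-bounded}) finishes this term; this is the first half of Lemma \ref{lem-bmolog-characterization}.

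For the first term, the logarithm does not come from the annular sum, which converges geometrically. It is the cost of the chain to the boundary, so what you must prove is $|\int(b-b_B)\mathfrak R_ja|\lesssim\|b\|_{\mathrm{BMO}^{\log}}/\log(e+x_B^n/r_B)$. This is not a statement about the oscillation of $b$ on $B$ alone. It needs $\big(\fint_{2^kB}|b-b_B|^{q'}\big)^{1/q'}\lesssim(k+1)\|b\|_{\mathrm{BMO}^{\log}}/\log(e+x_B^n/(2^kr_B))$ on every dilate (Lemma \ref{lem-bmolog-dilated-ball}), together with the elementary bound $\sum_k(k+2)2^{-k}/\log(e+x_B^n/(2^{k+1}r_B))\lesssim1/\log(e+x_B^n/r_B)$. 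You need an $L^{p}$ form of the logarithmic oscillation bound (Lemma \ref{lem-bmolog-JN}) in two places: here on $2B$, where $\mathfrak R_ja$ is only in $L^q$, and in the display above when $q<\infty$. That bound does not follow from the definition; the paper proves it by a cutoff argument. Your invocation of the Riesz-transform characterization plays no role in this half.

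Your necessity argument is correct and genuinely different from the paper's.
\begin{itemize}
\item The paper bounds $\|(b-b_B)a\|_{H^1_r}$ through the Riesz characterization (Proposition \ref{prop-riesz-characterization}). It then applies the converse half of Lemma \ref{lem-bmolog-characterization}, pairing with $g_{x_B}$ (uniformly in $\mathrm{BMO}_z$) via $H^1_r$--$\mathrm{BMO}_z$ duality.
\item You instead bound $\|\mathfrak R_j((b-b_B)a)\|_{L^1}$ from below by the log-divergent mass of the kernel.
\end{itemize}
To make your route rigorous you need a kernel lower bound the paper does not state. From \eqref{eq-odd-extension-riesz}, $\mathfrak R_j(x,y)=R_j(x-y)-R_j(x-\bar y)$ with $R_j(z)=c_nz_j/|z|^{n+1}$ and $\bar y=(y',-y^n)$. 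So the reflected part is $O((x_B^n)^{-n})$ on $B(x_B,x_B^n/2)$ and contributes only $O(1)$ there. Replacing $y$ by $x_B$ costs $\lesssim\|(b-b_B)a\|_{L^1}\lesssim\|b\|_{\mathrm{BMO}}$ by Lemma \ref{lem-Riesz-kernel}.

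What each route buys:
\begin{itemize}
\item Yours avoids duality and $g_{x_0}$, and gives more. Since $\int_{S^{n-1}}|\omega_j|\,d\sigma>0$ for each $j$, $H^1_r\to L^1$ boundedness of a single $[b,\mathfrak R_j]$ already forces $b\in\mathrm{BMO}^{\log}$, so summing over $j$ is unnecessary.
\item The paper's needs no explicit kernel, and it packages necessity and sufficiency into the same atomic characterization of $\mathrm{BMO}^{\log}$.
\end{itemize}
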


Part~(a) gives the endpoint weak-type estimate for the full BMO class. Part~(b) shows that strong boundedness on the fixed space $H^1_r(\mathbb R^n_+)$ requires, and is equivalent to, the additional boundary-sensitive logarithmic regularity of the symbol. In particular, the logarithmic condition is necessary and not an artifact of the proof. The norm equivalence is quantitative: the $\mathrm{BMO}^{\log}$ norm is controlled by, and controls, the usual BMO norm together with the $H^1_r$ operator norms of the component commutators.

Two ingredients are central to the proof. First, the reflected structure of the Dirichlet heat kernel yields off-diagonal estimates for $\mathfrak R$ acting on restriction-Hardy atoms, with an additional boundary decay in the non-cancellative case. Second, we introduce logarithmic molecules for $H^1_r(\mathbb R^n_+)$ and prove an atomic characterization of $\mathrm{BMO}^{\log}(\mathbb R^n_+)$ tailored to the product $a(b-b_B)$. These two mechanisms allow the boundary geometry to enter explicitly into both the sufficiency and the necessity arguments.

The remainder of the paper is organized as follows. Section~2 develops the required background. We recall the atomic description of $H^1_r(\mathbb R^n_+)$ in Section~2.1, introduce the relevant BMO-type spaces and prove the characterization of $\mathrm{BMO}^{\log}(\mathbb R^n_+)$ in Section~2.2, and establish the needed estimates for the Dirichlet Riesz transform in Section~2.3. Section~3 contains the proofs of the main theorem. The proof of Lemma~\ref{lem-gx0} is given in the appendix.

\bigskip

\noindent\textbf{Notation.}
\begin{itemize}
\item The letter $C$ denotes a positive constant, independent of the principal parameters under consideration, whose value may change from line to line. We write $f\lesssim g$ if $f\leq Cg$, and $f\simeq g$ if $f\lesssim g\lesssim f$.
\item If $B=B(x,r)$ is a ball and $t>0$, then $tB:=B(x,tr)$. For a set $E\subset\X$, let $\chi_E$ denote its characteristic function.
\item For $q\in[1,\infty]$, $q'$ denotes the conjugate exponent. We set $\mathbb Z^+:=\{1,2,\ldots\}$ and $\mathbb Z_0^+:=\mathbb Z^+\cup\{0\}$.
\item For a ball $B\subset\R_+^n$, $x_B$ and $r_B$ denote its centre and radius, respectively. For $x\in\R_+^n$, we write $x=(x',x^n)\in\R^{n-1}\times(0,\infty)$.
\item For every ball $B$, set $S_0(B):=B$ and $S_j(B):=2^jB\setminus2^{j-1}B$ for $j\in\mathbb Z^+$.
\item If $E\subset\R_+^n$ is measurable, then
\[
\fint_E f(x)\,dx:=\frac{1}{|E|}\int_E f(x)\,dx.
\]
\end{itemize}

\section{Preliminaries}\label{sec-hardy-restriction}
	
	 \subsection{The Hardy space of restriction $H^1_r(\R_+^n)$} 
	 	We first recall the atomic decomposition of the Hardy space of restriction. See, for example, \cite{Mi} and \cite[Section 4]{BDK}. 
	 
	 \begin{definition} 
	 	Let $1<q\le \infty$. A bounded, measurable function $a: \X\to \mathbb{R}$ is called an $(H^1_{r},q)$-atom if 
	 	\begin{enumerate}[\upshape (i)]
	 		\item $a$ is supported in a ball $B\subset \X$;
	 		\item $\|a\|_{L^q(\X)}\le |B|^{1/q-1}$;
	 		\item either $x_B^n/4<r_B<x_B^n/2$, or $r_B\le x_B^n/4$ and 
	 		\[
	 		\int a(x)\,dx=0.
	 		\]
	 	\end{enumerate}
	 \end{definition}
	 The Hardy space $H^{1,q}_{r}(\X)$ is defined as the set of all $f\in L^1(\X)$ such that
	 \[
	 f=\sum_j\lambda_j a_j
	 \]
	 where $a_j$ are $(H^1_{r},q)$-atoms and $\lambda_j$ are scalars satisfying $\sum_{j}|\lambda_j|<\infty$.
	 We also set
	 \[
	 \|f\|_{H^{1,q}_{r}(\X)}=\inf\Big\{\sum_{j}|\lambda_j|: f=\sum_j\lambda_j a_j \Big\},
	 \]
	 where the infimum is taken over all such decompositions.
	 
	 \begin{remark}
	 	It was proved that
	 	\[
	 	H^{1,q}_r(\mathbb R^n_+)=H^{1,\infty}_r(\mathbb R^n_+)
	 	\quad \text{for all } q\in(1,\infty].
	 	\]
	 	Therefore, the Hardy space $H^1_r(\mathbb R^n_+)$ may be defined as any $H^{1,q}_r(\mathbb R^n_+)$ with $q\in(1,\infty]$. See \cite{YYZ} (see also \cite[Section 4]{BDK}).
	 \end{remark}

	 It is known that the atomic space $H^1_r(\X)$  above coincides with  the Hardy space of restriction $H^1_{\mathrm{res}}(\X)$ defined by
	 \[
	 H^1_{\mathrm{res}}(\X)
	 =
	 \big\{ f\in L^1(\X) : \exists\, F\in H^1(\mathbb{R}^n)
	 \text{ such that } F|_{\X}=f \big\},
	 \]
	 and
	 \[
	 \|f\|_{H^1_{\mathrm{res}}(\X)}
	 =
	 \inf \big\{ \|F\|_{H^1(\mathbb{R}^n)} :
	 F\in H^1(\mathbb{R}^n),\ F|_{\X}=f \big\}.
	 \]
	 See \cite[Remark 4.5]{BDK}.

	We now introduce a logarithmic molecular notion adapted to the Hardy space $H^1_r(\R_+^n)$.
	\begin{definition}\label{def-log-molecule} Let $s\in (1,\vc)$. A measurable function $m$ is called a $(s,\varepsilon)$-molecule of log-type associated with a ball $ B=B(x_B,r_B)$ if 
			\begin{enumerate}[\rm (a)]
				\item $\|m\|_{L^s(S_j(B))}\leq 2^{-j\varepsilon} |2^j B|^{1/s-1}$ for all $j\in\mathbb{Z}^+_0$,
				
				\item  $\displaystyle\Big|\int_{\X}m(x)dx\Big|\leq  \frac{1}{\log\Big(e+ \frac{x_B^n}{r_B}\Big)}$.
			\end{enumerate}
	 A $(s,\varepsilon)$-molecule of log-type associated with a ball $B$ is called an $s$-atom of log-type if $\operatorname{supp}m\subset B$.
	\end{definition}

The following molecular estimate will be used below:
	\begin{lemma}\label{lem-log-molecule}
		Let  $q\in (1,\vc)$ and $\varepsilon>0$. Then there exists a constant $C>0$ such that, for all $(q, \varepsilon)$-molecules of log-type  $m$,
		\[\|m\|_{H^1_{r}(\mathbb R^n_+)}\leq C.\]
	\end{lemma}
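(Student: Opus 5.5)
We decompose $m$ into pieces supported on the annuli and reorganize it so that all but a controlled remainder have vanishing integral. We may assume $x_B\in\X$; the interesting regime is $r_B\le x_B^n/4$, since otherwise the integral condition is vacuous. For $j\in\mathbb Z_0^+$ set $m_j:=m\chi_{S_j(B)\cap\X}$, $\mu_j:=\int m_j$, and $\phi_j:=\chi_{2^jB\cap\X}/|2^jB\cap\X|$. The first step is a standard telescoping: with $N_j:=\sum_{k\ge j}\mu_k$ we write
\[
m=\sum_{j\ge0}\big(m_j-\mu_j\phi_j\big)+\sum_{j\ge0}N_{j+1}\big(\phi_{j+1}-\phi_j\big)+N_0\,\phi_0 .
\]
Each $m_j-\mu_j\phi_j$ and each $\phi_{j+1}-\phi_j$ has mean zero and is supported in $2^{j+1}B\cap\X$. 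By H\"older and (a), $|\mu_j|\lesssim 2^{-j\varepsilon}$ and $|N_{j+1}|\lesssim 2^{-j\varepsilon}$.

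The second step uses the restriction characterization. Since $H^1_r$ coincides with $H^1_{\mathrm{res}}$, a function on $\X$ supported in $2^{j+1}B\cap\X$, with mean zero, and with $L^q$ norm $\lesssim 2^{-j\varepsilon}|2^{j+1}B|^{1/q-1}$ is, after extension by zero, $2^{-j\varepsilon}$ times a classical $(1,q)$-atom of $H^1(\mathbb R^n)$. It therefore has $H^1_r$ norm $\lesssim 2^{-j\varepsilon}$. Here we need $|2^jB\cap\X|\gtrsim|2^jB|$, which holds because $x_B\in\X$. Summing the geometric series controls the first two sums by a constant.

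The third step handles the term $N_0\phi_0=\big(\int m\big)\phi_0$, which is the main obstacle. Here we must show that $\|\phi_0\|_{H^1_r}\lesssim\log(e+x_B^n/r_B)$, so that (b) compensates. Let $k$ be the largest integer with $2^kr_B\le x_B^n/4$; then $k\simeq\log_2(x_B^n/r_B)$. We telescope again,
\[
\phi_0=\sum_{i=0}^{k-1}\big(\phi_i-\phi_{i+1}\big)+\phi_k ,
\]
where $\phi_i:=\chi_{2^iB}/|2^iB|$ and $2^iB\subset\X$. Each difference is a uniformly bounded multiple of a cancellative $(H^1_r,\infty)$-atom, since its support ball $2^{i+1}B$ has radius at most $x_B^n/2$. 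At this scale it is either a cancellative atom, or it falls into the non-cancellative range with comparable radius, where an atom is allowed anyway. This contributes $\lesssim k$.

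For the final piece $\phi_k$, its radius $2^kr_B$ lies in $(x_B^n/8,x_B^n/4]$. We enlarge it to the concentric ball $B^*$ of radius $x_B^n/3$. Then $B^*$ satisfies $x^n_{B^*}/4<r_{B^*}<x^n_{B^*}/2$, so $\chi_{B^*}/|B^*|$ is a non-cancellative atom. Moreover $\phi_k-\chi_{B^*}/|B^*|$ is a bounded multiple of a cancellative atom on $B^*$. Hence $\|\phi_0\|_{H^1_r}\lesssim 1+k\lesssim\log(e+x_B^n/r_B)$. Combined with (b) this gives $|N_0|\,\|\phi_0\|_{H^1_r}\lesssim1$.

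When $r_B>x_B^n/4$, we have $\log(e+x_B^n/r_B)\simeq1$ and $\phi_0$ is directly a bounded multiple of an atom, after a similar enlargement if needed. This completes the bound $\|m\|_{H^1_r}\le C$, with $C$ depending only on $n,q,\varepsilon$.

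One subtlety should be checked: the balls $2^{j}B$ for large $j$ stick out of $\X$. This is why we use $H^1_{\mathrm{res}}$ for the mean-zero pieces rather than the atoms of the definition; zero extension of a mean-zero function gives a genuine $H^1(\mathbb R^n)$ atom. For the logarithmic step we only use balls contained in $\X$.
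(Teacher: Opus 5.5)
Your proposal is correct. Its first two steps match the paper's: annular telescoping, with the mean-zero pieces turned into classical $H^1(\mathbb R^n)$ atoms by zero extension. Normalizing over $2^jB\cap\X$ is in fact more careful than the paper, which ignores that the dilates leave $\X$.

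The genuine difference is the non-cancellative term $(\int m)\,\chi_B/|B|$.
\begin{itemize}
\item The paper argues by duality. It tests against $\phi\in C_c^\infty(\X)$, uses $(\mathrm{VMO}_z)^*=H^1_r$ from Cao--Yabuta, and quotes the ``standard'' estimate $|\phi_B|\lesssim\log(e+x_B^n/r_B)\|\phi\|_{\mathrm{BMO}_z}$.
\item You prove the predual statement $\|\chi_B/|B|\|_{H^1_r}\lesssim\log(e+x_B^n/r_B)$ directly. You telescope the normalized indicators of $2^iB$ up to the boundary scale and absorb the last one into the admissible ball $B^*$ of radius $x_B^n/3$.
\end{itemize}
The two arguments are dual to each other: pairing your telescoping sum with $\phi$ is exactly the chain-of-averages proof of the bound on $|\phi_B|$. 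Yours is self-contained and constructive. It needs neither the $\mathrm{VMO}_z$--$H^1_r$ duality nor the passage from a pairing bound on $C_c^\infty$ test functions to an $H^1_r$ norm bound, and it yields an explicit atomic decomposition. The paper's version is shorter once that duality is granted.

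Two small remarks.
\begin{itemize}
\item For $i\le k-1$ the ball $2^{i+1}B$ has radius at most $x_B^n/4$, so those differences are honest cancellative atoms. Your hedge about the other range is unnecessary.
\item In the case $r_B\ge x_B^n/2$, no ``enlargement'' can work, since no admissible ball (radius $<x^n/2$) can contain such a $B$. Instead use the odd reflection of $\chi_{B\cap\X}/|B\cap\X|$. It has mean zero, is supported in a ball of radius $\lesssim r_B$, and is bounded by $2/|B|$, so it is a bounded multiple of a classical atom. It restricts to the desired function, and $H^1_r=H^1_{\mathrm{res}}$ then gives the bound.
\end{itemize}
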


We begin the proof of Lemma~\ref{lem-log-molecule} with two auxiliary facts.	
	\begin{definition}\label{def-bmoz}
		
		A function $b\in L^1_{\mathrm{loc}}(\mathbb{R}^n_+)$ is said to belong to 
		$\mathrm{BMO}_z(\mathbb{R}^n_+)$ if
		\[
		\|b\|_{\mathrm{BMO}_z(\mathbb{R}^n_+)}:=\sup_{\substack{B:\,\text{ball}\\ r_B<x_B^n/4}}\frac{1}{|B|}\int_B |b(x)-b_B|\,dx +
		\sup_{\substack{B:\,\text{ball}\\ r_B\ge x_B^n/4}}\frac{1}{|B|}\int_B |b(x)|\,dx <\vc.
		\]
	\end{definition}

Obviously,
		\[
		\|f\|_{\mathrm{BMO}(\mathbb{R}^n_+)}\lesssim \|f\|_{\mathrm{BMO}_z(\mathbb{R}^n_+)}.
		\]
		In addition, the space $\mathrm{BMO}_z(\mathbb{R}^n_+)$ is the dual space of 
		$H^1_r(\mathbb{R}^n_+)$, that is,
		\[
		\big(H^1_r(\mathbb{R}^n_+)\big)^*
		=
		\mathrm{BMO}_z(\mathbb{R}^n_+).
		\]
		See, for example, \cite[Theorem 2.1]{YYZ}. The notation $\mathrm{BMO}_z$ reflects the fact that the space in Definition~\ref{def-bmoz} coincides, with equivalent norm, with the BMO space obtained by zero extension. More precisely, it consists of all functions $f$ whose zero extension $f_z$ to $\mathbb{R}^n$ belongs to $\mathrm{BMO}(\mathbb{R}^n)$, with norm
		\[
		\|f\|_{\mathrm{BMO}_z(\mathbb{R}^n_+)} \simeq \|f_z\|_{\mathrm{BMO}(\mathbb{R}^n)}.
		\]

	We also use the following duality result, taken from \cite[Theorem 4.4]{CY}. 
	
	\begin{proposition} 
		\label{prop-vmoz-duality} Let ${\rm VMO}_z(\R^n_+)$ be the closure of $C^\vc_c(\R_+^n)$ in ${\rm BMO}_z(\R^n_+)$. Then we have
		\[
		({\rm VMO}_z(\R^n_+))^* = H^1_r(\R_+^n).
		\]
	\end{proposition}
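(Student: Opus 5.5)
The plan is to transfer the problem to $\mathbb R^n$ by zero extension and then use the classical Coifman--Weiss duality $(\mathrm{VMO}(\mathbb R^n))^*=H^1(\mathbb R^n)$. Two facts recalled above make this possible. First, $\|f\|_{\mathrm{BMO}_z(\X)}\simeq\|f_z\|_{\mathrm{BMO}(\mathbb R^n)}$. Second, $H^1_r(\X)=H^1_{\mathrm{res}}(\X)$ with equivalent norms. Once these are in place, the statement reduces to Hahn--Banach bookkeeping for a quotient of $H^1(\mathbb R^n)$.

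\emph{Step 1 (realizing $\mathrm{VMO}_z$ inside $\mathrm{VMO}(\mathbb R^n)$).} The map $\iota:f\mapsto f_z$ is an isomorphism from $\mathrm{BMO}_z(\X)$ onto its image in $\mathrm{BMO}(\mathbb R^n)$, by the norm equivalence. For $\varphi\in C_c^\infty(\X)$, the extension $\varphi_z$ lies in $C_c^\infty(\mathbb R^n)\subset \mathrm{VMO}(\mathbb R^n)$. Hence $\iota$ maps $\mathrm{VMO}_z(\X)$ isomorphically onto
\[
X:=\overline{\{\varphi_z:\varphi\in C_c^\infty(\X)\}}^{\,\mathrm{BMO}(\mathbb R^n)},
\]
which is a closed subspace of $\mathrm{VMO}(\mathbb R^n)$. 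It follows that $(\mathrm{VMO}_z(\X))^*\cong X^*$.

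\emph{Step 2 (Hahn--Banach).} For a closed subspace $X$ of the Banach space $\mathrm{VMO}(\mathbb R^n)$, Hahn--Banach gives the isometry $X^*\cong \mathrm{VMO}(\mathbb R^n)^*/X^\perp$. By Coifman--Weiss, $\mathrm{VMO}(\mathbb R^n)^*=H^1(\mathbb R^n)$, with the pairing $\langle F,g\rangle=\int Fg$ on $C_c^\infty$. Therefore $X^*\cong H^1(\mathbb R^n)/X^\perp$. Since $X$ is the closure of the $\varphi_z$, we have
\[
X^\perp=\Big\{F\in H^1(\mathbb R^n):\int_{\X}F\varphi=0\ \ \forall\varphi\in C_c^\infty(\X)\Big\}=\{F\in H^1(\mathbb R^n):F|_{\X}=0\ \text{a.e.}\}.
\]

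\emph{Step 3 (identifying the quotient).} Restriction $F\mapsto F|_{\X}$ maps $H^1(\mathbb R^n)$ onto $H^1_{\mathrm{res}}(\X)$, and its kernel is exactly $X^\perp$. The quotient norm $\inf\{\|F\|_{H^1}:F|_{\X}=f\}$ is by definition $\|f\|_{H^1_{\mathrm{res}}}$. So $H^1(\mathbb R^n)/X^\perp\cong H^1_{\mathrm{res}}(\X)=H^1_r(\X)$ isometrically for the $H^1_{\mathrm{res}}$ norm, and hence isomorphically for the $H^1_r$ norm.

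Composing Steps 1--3, each $f\in H^1_r(\X)$ acts on $\mathrm{VMO}_z(\X)$ by $\varphi\mapsto\int_{\X}f\varphi$ on the dense class $C_c^\infty(\X)$. Every functional in $(\mathrm{VMO}_z)^*$ arises this way from a unique $f$, with $\|\ell\|\simeq\|f\|_{H^1_r}$.

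The main point needing care is the consistency of pairings. The abstract identification in Step 2 must agree with integration against $f$ on the dense class $C_c^\infty(\X)$, not merely up to an unspecified isomorphism. This is where I expect the work to lie. I would check it by verifying that the Coifman--Weiss pairing of $F\in H^1$ with $\varphi_z$ equals $\int_{\X}F\varphi$. Since a functional on $\mathrm{VMO}_z$ is determined by its values on $C_c^\infty(\X)$, this also settles injectivity. A secondary point is that "closure in $\mathrm{BMO}_z$" and "closure in $\mathrm{BMO}(\mathbb R^n)$" describe the same set. This holds because the two norms are equivalent on zero extensions.
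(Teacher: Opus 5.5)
Your argument is correct, and it takes a genuinely different route from the paper's. The paper gives no proof of this proposition: it imports the duality from Cao--Yabuta \cite[Theorem 4.4]{CY}. The proof environment printed right after the statement is actually the proof of Lemma~\ref{lem-log-molecule}.

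You instead derive the result from the classical Coifman--Weiss duality $(\mathrm{VMO}(\mathbb R^n))^*=H^1(\mathbb R^n)$. Besides that theorem and Hahn--Banach, you use only two facts the paper already records: $\|f\|_{\mathrm{BMO}_z}\simeq\|f_z\|_{\mathrm{BMO}(\mathbb R^n)}$ and $H^1_r=H^1_{\mathrm{res}}$. The citation buys brevity. Your route buys a self-contained explanation of why zero extension on the $\mathrm{BMO}$ side pairs with restriction on the $H^1$ side: restriction is exactly the quotient of $H^1(\mathbb R^n)$ by the annihilator $X^\perp$. It also makes explicit that the proposition rests on both directions of the zero-extension norm equivalence.

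Two small remarks. First, the pairing consistency you flag is not where the work lies. The Coifman--Weiss identification is, by construction, $g\mapsto\int Fg$ for $g\in C_c^\infty(\mathbb R^n)$, bounded by Fefferman's inequality. So $\langle F,\varphi_z\rangle=\int_{\mathbb R^n_+}F\varphi$ is automatic.

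Second, you can skip the closure identification in Step~1, which as written tacitly needs completeness of $\mathrm{BMO}_z$. Take $\ell\in(\mathrm{VMO}_z)^*$ and transport it to the subspace $\{\varphi_z:\varphi\in C_c^\infty(\mathbb R^n_+)\}$ of $\mathrm{VMO}(\mathbb R^n)$. It is bounded there by the lower bound $\|\varphi\|_{\mathrm{BMO}_z}\lesssim\|\varphi_z\|_{\mathrm{BMO}(\mathbb R^n)}$. Extend it by Hahn--Banach, represent it by some $F\in H^1(\mathbb R^n)$, and set $f=F|_{\mathbb R^n_+}$. The converse direction and injectivity then follow from the upper bound $\|\varphi_z\|_{\mathrm{BMO}(\mathbb R^n)}\lesssim\|\varphi\|_{\mathrm{BMO}_z}$ together with the density of $C_c^\infty(\mathbb R^n_+)$ in $\mathrm{VMO}_z$.
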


	\begin{proof}
		We set
		for $j\ge 0$, $\displaystyle\alpha_j = \int_{S_j(B)} m(x)dx$ and $\chi_j=\f{1}{|S_j(B)|}1_{S_j(B)}$. Then we define
		$$
		a_j(x)=m(x)\chi_{S_j(B)}(x)-\alpha_j \chi_j(x).
		$$
		If we set $N_j=\sum_{k=j}^\vc \alpha_k$, then we have
		\begin{equation}\label{eq-molecule-decomposition}
			\begin{aligned}
				m(x)&=\sum_{j=0}^\vc a_j(x) +\sum_{j=0}^\vc N_{j+1}(\chi_{j+1}(x)-\chi_j(x))+ \chi_0(x)\int m(y)dy\\
				&=\sum_{j=0}^\vc a_j(x)+ \sum_{j=0}^\vc b_j(x) + a(x).
			\end{aligned}
		\end{equation}
	For each $j$, we have 
		\begin{equation}\label{eq-aj-atom-bounds}
			{\rm supp}\,a_j\subset 2^jB, \int a_j =0 \quad \text{and} \quad \|a_j\|_{L^q}\leq C 2^{-j\varepsilon}|2^jB|^{1/q-1}.
		\end{equation}
By \eqref{eq-aj-atom-bounds}, the zero extension of $2^{j\varepsilon}a_j$ to $\mathbb R^n$ is, up to a uniform constant, a classical $(H^1,q)$-atom supported in a ball comparable with $2^jB$. Hence
\[
\|a_j\|_{H^1_r(\X)}\lesssim 2^{-j\varepsilon},
\]
and therefore
\[
\Big\|\sum_{j}a_j\Big\|_{H^1_r(\X)}\lesssim 1.
\]

		Next we also observe that
		\begin{equation}\label{eq-bj-support}
			{\rm supp}\,b_j\subset 2^{j+1}B \quad \text{and} \quad \int b_j =0.
		\end{equation}
		Moreover,
		$$
		\|b_j\|_{L^q}\leq |N_{j+1}||2^jB|^{1/q-1}.
		$$
		From the definition of $N_{j+1}$ and H\"older's inequality, we obtain
		$$
		\begin{aligned}
			|N_{j+1}|&\leq \sum_{k\geq j+1}\int_{S_k(B)}|m(y)|dy\leq  \sum_{k\geq j+1}|2^kB|^{1-1/q}\|m\|_{L^q(S_k(B))}\\
			&\leq \sum_{k\geq j+1}2^{-k\varepsilon}\\
			& \leq C2^{-j\varepsilon}.
		\end{aligned}
		$$
		This implies that
		\begin{equation}\label{eq-bj-size}
			\|b_j\|_{L^q}\leq C2^{-j\varepsilon}|2^jB|^{1/q-1}.
		\end{equation}
		Similarly, since each $b_j$ has mean zero, \eqref{eq-bj-support}--\eqref{eq-bj-size} and the same zero-extension argument give
\[
\|b_j\|_{H^1_r(\X)}\lesssim 2^{-j\varepsilon},
\qquad
\Big\|\sum_{j}b_j\Big\|_{H^1_r(\X)}\lesssim 1.
\]

It remains to prove that $\|a\|_{H^1_r}\lesssim 1$.  By Proposition \ref{prop-vmoz-duality} we claim that
		$$
		\Big|\int_B a(x)\phi(x)dx\Big|\leq C\|\phi\|_{BMO_z},
		$$
		for all $\phi\in C_c^\vc(\X)$.
		
		Indeed, we have
		$$
		\Big|\int_B a(x)\phi(x)dx\Big|\leq \Big|\int_B a(x)(\phi(x)-\phi_B)dx\Big| + |\phi_B|\Big|\int_B a(x)dx\Big|.
		$$
		By H\"older's inequality we have
		$$
		\begin{aligned}
			\Big|\int_B a(x)(\phi(x)-\phi_B)dx\Big|&\leq \|a\|_{L^q(B)}\Big(\int_B|\phi(x)-\phi_B|^{q'}dx\Big)^{1/q'}\\
			&\leq C |B|^{1/q-1}|B|^{1/q'}\|\phi\|_{BMO}\\
			&\leq C\|\phi\|_{BMO_z}.
		\end{aligned}
		$$
		To dominate the second term, by the standard argument, we have 
		$$
		|\phi_B|\lesssim \|\phi\|_{BMO_z}\log\Big(e+\f{x^n_B}{r_B}\Big).
		$$
		Inserting this estimate into the second term gives
		$$
		\begin{aligned}
			|\phi_B|\Big|\int_B a(x)dx\Big|&\lesssim \|\phi\|_{BMO_z}\log\Big(e+\f{x^n_B}{r_B}\Big) \Big|\int_B m(x)dx\Big| \lesssim \|\phi\|_{BMO_z}.
		\end{aligned}
		$$
		This completes the proof.
	\end{proof}
	
\subsection{$\mathrm{BMO}$-type spaces and a characterization of $\mathrm{BMO}^{\log}$}
In this section, we introduce $\mathrm{BMO}^{\log}$ and establish a characterization that will be used in the proof of the main theorem.

Here and in what follows, for any ball $B\subset \X$ and $f\in L^1_{\rm loc}(\X)$, we denote 
\begin{equation}
	f_B:= \frac{1}{|B|}\int_B f(y)dy\quad\text{and}\quad \mathrm{MO}(f,B):= \frac{1}{|B|}\int_B |f(x)- f_B|dx.
\end{equation}	

A locally integrable function $f$ belongs to $\mathrm{BMO} (\X)$ if
\begin{equation}
	\|f\|_{\mathrm{BMO}(\X)}=\sup_{B}   \mathrm{MO}(f,B) <\infty,
\end{equation}
where the supremum is taken over all balls $B=B(x_B,r_B)\subset\X$. 

We recall the following two fundamental properties of BMO functions: 
\begin{itemize}
	\item for every $1<p<\infty$ there exists a constant 
	$C_p>0$ such that for every ball $B\subset \X$,
	\begin{equation}\label{eq-classical-JN}
		\Big( \frac{1}{|B|}\int_B |f(x)-f_B|^p \, dx \Big)^{1/p}
		\le C_p \|f\|_{\mathrm{BMO}(\X)};
	\end{equation}
	\item for any ball $B\subset \X$ and any integer $j\ge 0$, one has
	\begin{equation}\label{eq-BMO-dilated-averages}
		|f_{2^j B}-f_B|
		\lesssim (j+1)\,\|f\|_{\mathrm{BMO}(\X)},
	\end{equation}
	where the implicit constant is independent of $f$, $B$, and $j$.
	
\end{itemize}

A locally integrable function $f$ belongs to $\mathrm{BMO}^{\log}(\X)$ if
\begin{equation}\label{eq-bmolog-definition}
	\|f\|_{\mathrm{BMO}^{\log}(\X)}=\sup_{B}  \log\Big(e+\frac{x_B^n}{r_B} \Big)\mathrm{MO}(f,B)<\infty,
\end{equation}
where the supremum is taken over all balls $B=B(x_B,r_B)\subset\X$. 	
\begin{remark}\label{rem-bmolog-embeds-bmo}
	For  every $f\in \mathrm{BMO}^{\log}(\X)$, we have
	\[\|f\|_{\mathrm{BMO}(\X)}\leq \|f\|_{\mathrm{BMO}^{\log}(\X)}.\]
\end{remark}

	The next lemma is a logarithmic analogue of the John--Nirenberg estimate for classical BMO functions.
\begin{lemma}\label{lem-bmolog-JN}
	Let $p\in [1,\vc)$. Then 
	\begin{equation}\label{eq-bmolog-JN}
		\log\Big(e +\frac{x_B^n}{r_B}\Big)\Big(\fint_{B} |f(x)- f_{B}|^p dx\Big)^{1/p}\leq  C\|f\|_{\mathrm{BMO}^{\log}}
	\end{equation}
	for all balls $B= B(x_B,r_B)$.
\end{lemma}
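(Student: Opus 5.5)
The plan is to reduce the estimate to the classical John--Nirenberg inequality \eqref{eq-classical-JN}, applied on the ball $B$ itself. Fix a ball $B=B(x_B,r_B)\subset\X$ and set $L_B:=\log(e+x_B^n/r_B)$. The case $p=1$ is exactly the definition \eqref{eq-bmolog-definition}, so assume $p>1$. The naive bound $\big(\fint_B|f-f_B|^p\big)^{1/p}\le C_p\|f\|_{\mathrm{BMO}}$ only gives $C_p\|f\|_{\mathrm{BMO}^{\log}}$ by Remark~\ref{rem-bmolog-embeds-bmo}. That loses the factor $L_B$, so we cannot use the global BMO norm directly and must localize.

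The key observation is that every sub-ball $B'\subset B$ has comparable or larger logarithmic weight. Concretely, if $B'=B(x_{B'},r_{B'})\subset B$, then $r_{B'}\le r_B$, and since $B\subset\X$ forces $r_B\le x_B^n$ we have $x_{B'}^n\ge x_B^n-r_B$. I will check that
\[
\log\Big(e+\frac{x_{B'}^n}{r_{B'}}\Big)\ge c\,\log\Big(e+\frac{x_B^n}{r_B}\Big)
\]
with $c>0$ absolute. This splits into two cases.
\begin{enumerate}[(i)]
\item If $x_B^n\ge 2r_B$, then $x_{B'}^n\ge x_B^n/2$, so $x_{B'}^n/r_{B'}\ge \tfrac12 x_B^n/r_B$, and $\log(e+t/2)\ge \tfrac12\log(e+t)$.
\item If $x_B^n<2r_B$, then $L_B\le\log(e+2)$ is bounded, and the inequality is trivial because the left side is at least $1$.
\end{enumerate}
It follows that $\mathrm{MO}(f,B')\le C L_B^{-1}\|f\|_{\mathrm{BMO}^{\log}}$ for all balls $B'\subset B$.

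Consequently, the restriction $f|_B$ belongs to $\mathrm{BMO}(B)$, computed over sub-balls of $B$, with norm at most $C L_B^{-1}\|f\|_{\mathrm{BMO}^{\log}}$. I will then invoke the local John--Nirenberg inequality on the ball $B$. This is the classical fact that a function whose mean oscillation over all sub-balls of a ball $B$ is at most $\lambda$ satisfies $\big(\fint_B|f-f_B|^p\big)^{1/p}\le C_p\lambda$. The standard Calderón--Zygmund stopping-time proof runs inside $B$, or inside a cube comparable to $B$, using only sub-cubes. Passing between sub-cubes and sub-balls costs only a dimensional constant, since each sub-cube is contained in a sub-ball of comparable measure, or can be handled via dyadic cubes inside a cube inscribed in and containing $B$ up to fixed dilations. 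Multiplying the resulting bound by $L_B$ yields \eqref{eq-bmolog-JN}.

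The main obstacle is justifying the localized John--Nirenberg step within $B$. The estimate \eqref{eq-classical-JN} as stated uses the global BMO norm over all balls in $\X$, whereas we need a version where only sub-balls of $B$ enter. I expect to handle this by citing the standard local form of John--Nirenberg, which holds on any cube using only its dyadic subcubes, together with a covering comparison between balls and cubes. If needed, the dyadic cubes of the cube $Q$ inscribed in $B$ can be used, with $B$ covered by boundedly many such cubes. Each of these cubes is contained in a sub-ball of $B$ of comparable size, and the ratio bound above applies to those sub-balls as well.
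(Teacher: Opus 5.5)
Your comparison of logarithmic weights is correct: for $B\subset\X$ and every ball $B'\subset B$ you get $\log(e+x_{B'}^n/r_{B'})\ge c\log(e+x_B^n/r_B)$, and hence $\mathrm{MO}(f,B')\lesssim \log(e+x_B^n/r_B)^{-1}\|f\|_{\mathrm{BMO}^{\log}}$.

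The gap is in the localized John--Nirenberg step, and neither of your justifications works.

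\emph{Sub-cubes of $B$.} It is false that every sub-cube of $B$ lies in a sub-ball of $B$ of comparable measure. Take a small axis-parallel cube $Q\subset B$ of side $\ell$ whose top face has its vertices on a horizontal slice of $\partial B$; such a cube fits inside $B$ for $\ell$ small. Any ball $B'\subset B$ containing $Q$ has $\overline{B'}$ meeting $\partial B$ in at least two points, which forces $B'=B$. So $|B'|/|Q|\simeq (r_B/\ell)^n$ is unbounded. The stopping-time argument needs control on arbitrarily small cubes, including ones of this kind, so the comparison breaks down.

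\emph{The fallback.} The axis-parallel sub-cubes of the cube inscribed in $B$ do lie in comparable sub-balls of $B$, since the circumscribed ball of a sub-cube lies in the circumscribed ball of the big cube, which is $B$. But $B$ cannot be covered by boundedly many cubes contained in $B$. For such a cube, $\overline Q\cap\partial B$ consists of vertices only, so a finite union of them has closure meeting $\partial B$ in finitely many points.

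The localized inequality itself is true: balls are uniform domains with scale-invariant constants, so John--Nirenberg on John domains, or Jones' BMO extension theorem, applies. But what you wrote does not prove it.

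A short repair is to enlarge rather than shrink, using that $f$ is defined on all of $\X$.
\begin{enumerate}[(i)]
\item If $x_B^n<2\sqrt n\,r_B$, then $\log(e+x_B^n/r_B)\le\log(e+2\sqrt n)$, and \eqref{eq-classical-JN} with Remark~\ref{rem-bmolog-embeds-bmo} already suffices.
\item If $x_B^n\ge 2\sqrt n\,r_B$, then $\sqrt n B\subset\X$. Your own argument gives $\log(e+x_{B'}^n/r_{B'})\ge (2\sqrt n)^{-1}\log(e+x_B^n/r_B)$ for every ball $B'\subset\sqrt n B$. Let $Q_0$ be the cube of side $2r_B$ centred at $x_B$. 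Every sub-cube $Q\subset Q_0$ has its circumscribed ball $B_Q$ inside $\sqrt n B$, with $|B_Q|\simeq|Q|$. Hence $\fint_Q|f-f_Q|\le 2\frac{|B_Q|}{|Q|}\mathrm{MO}(f,B_Q)\lesssim \log(e+x_B^n/r_B)^{-1}\|f\|_{\mathrm{BMO}^{\log}}$. The cube John--Nirenberg inequality on $Q_0$, together with $B\subset Q_0$ and $|Q_0|\simeq|B|$, then gives the lemma.
\end{enumerate}

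Once repaired, your route differs from the paper's, which never localizes John--Nirenberg. The paper multiplies $f-f_{2B}$ by a Lipschitz cutoff $h$ equal to $1$ on $B$ and supported in $2B$. It proves $\|h(f-f_{2B})\|_{\mathrm{BMO}(\X)}\lesssim \log(e+x_B^n/r_B)^{-1}\|f\|_{\mathrm{BMO}^{\log}}$ by splitting test balls $\widetilde B$ according to whether $r_{\widetilde B}>r_B$, with a telescoping estimate damped by $r_{\widetilde B}/r_B$ in the small-radius case. It then applies the global inequality \eqref{eq-classical-JN}. Your repaired argument is shorter: it needs only the cube inequality and the comparability of the logarithmic weights on a fixed dilate of $B$.
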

\begin{proof}
	 
	Define $h:\X\to [0,1]$ by
	\begin{equation*}
		h(x)=
		\begin{cases}
			1, & x\in B,\\
			\frac{2 r_B- |x-x_B|}{r_B},  & x\in 2 B\setminus B,\\
			0, & x\notin 2 B.
		\end{cases}
	\end{equation*}
	Clearly, 
	\begin{equation}\label{eq-cutoff-Lipschitz}
		|h(x)- h(y)|\le \frac{|x-y|}{r_B}, \ \ \ x,y\in\X.
	\end{equation}

Define $\widetilde f^{2B}:= f- f_{2 B}$. Since $h(x)=1$ on $B$, using \eqref{eq-classical-JN} we have
	\begin{align*}
		\Big( \fint_{B} |f(x)- f_{B}|^p dx\Big)^{1/p} &= \Big( \fint_{B} |h(x)\widetilde f^{2B}(x)- (h\widetilde f^{2B})_{B}|^p dx\Big)^{1/p}\\
		& \lesssim \|h\widetilde f^{2B}\|_{\mathrm{BMO}(\X)}.
	\end{align*}

	Thus, it suffices to prove that 
	$$\|h\widetilde f^{2B}\|_{\mathrm{BMO}(\X)}\lesssim \frac{1}{\log\Big(e + \frac{x_B^n}{r_B}\Big)}\|f\|_{\mathrm{BMO}^{\log}},$$
	or equivalently,
	\begin{equation}\label{eq-localized-bmolog-bound}
		\mathrm{MO}(h\widetilde f^{2B}, \widetilde{B}) \lesssim \frac{1}{\log\Big(e + \frac{x_B^n}{r_B}\Big)}\|f\|_{\mathrm{BMO}^{\log}}\quad\text{for all balls } \widetilde{B}= B(x_{\widetilde{B}},r_{\widetilde{B}}).
	\end{equation}
	
	Since $h(x)=0$ for all $x\notin 2B$, we only need to consider balls $\widetilde{B}$ satisfying $\widetilde{B}\cap 2B\ne\emptyset$. We distinguish two cases.
	
	\medskip
	
	{\sl Case 1:} $r_{ \widetilde{B}}> r_B$. Then, $\widetilde{B}\cap (2 B)\ne\emptyset$ implies that $2 B\subset 5 \widetilde{B}$. Therefore, using  the fact that $h(x)=0$ for all $x\notin 2 B$, we obtain
	\begin{align*}
		\mathrm{MO}(h\widetilde f^{2B}, \widetilde{B}) &\leq  2 \fint_{\widetilde{B}} |h(x) \widetilde f^{2B}(x)|dx\\
		&\lesssim \fint_{5 \widetilde{B}} |h(y) \widetilde f^{2B}(y)|dy =\fint_{5 \widetilde{B}} |h(y)[f(y)-f_{2B}]|dy\\
		&\lesssim  \frac{1}{|2 B|}\int_{2 B}|f(x)- f_{2 B}|dx\\
		&\lesssim \frac{1}{\log\Big(e +\frac{x_B^n}{2 r_B}\Big)}\|f\|_{\mathrm{BMO}^{\log}} \simeq \frac{1}{\log\Big(e +\frac{x_B^n}{ r_B}\Big)}\|f\|_{\mathrm{BMO}^{\log}}.
	\end{align*}
	
	{\sl Case 2:} $r_B \geq r_{ \widetilde{B}}$. We first prove that 
	\begin{equation}\label{eq-boundary-log-comparison}
		\log\Big(e+ \frac{x^n_B}{r_B}\Big) \lesssim  \log\Big(e+ \frac{x^n_{\widetilde B}}{r_{B}}\Big) \ \text{and}  \ \log\Big(e+ \frac{x^n_B}{r_B}\Big) \lesssim  \log\Big(e+ \frac{x^n_{\widetilde B}}{r_{\widetilde B}}\Big).
	\end{equation}
	Since the second inequality is a consequence of the first one, we need only to prove the first inequality. To do this, from the facts $\widetilde B\cap 2B\ne \emptyset$ and $r_B>r_{\widetilde B}$, we have
	\[
	3r_B \ge  |x_B-x_{\widetilde B}|\ge |x^n_B-x^n_{\widetilde B}|,
	\] 
	which implies
	\[
	x^n_B\le 3r_B +x^n_{\widetilde B}.
	\]
	As a consequence,
	\[
	1+ \frac{x^n_B}{r_B}\le 1+\f{3r_B +x^n_{\widetilde B}}{r_B}\simeq 1 +\f{x^n_{\widetilde B}}{r_B},
	\]
	which implies \eqref{eq-boundary-log-comparison}.
	
	We now prove \eqref{eq-localized-bmolog-bound}. We first write
		\begin{align*}
		\mathrm{MO}(h \widetilde f^{2B}, \widetilde{B})	&\leq 2  \fint_{\widetilde{B}} |h(x) \widetilde f^{2B}(x)- h_{\widetilde{B}} (\widetilde f^{2B})_{\widetilde{B}}|dx \\
		&\leq 2  \fint_{\widetilde{B}} |h(x)(\widetilde f^{2B}(x)- (\widetilde f^{2B})_{\widetilde{B}})| dx+ 2 |(\widetilde f^{2B})_{\widetilde{B}}|  \fint_{\widetilde{B}} \fint_{\widetilde{B}} |h(y)- h(x)|dxdy\\
		&\leq 2\fint_{\widetilde{B}} |f(x)- f_{\widetilde{B}}|dx +   \frac{2r_{ \widetilde{B}}}{r_B} |(\widetilde f^{2B})_{\widetilde{B}}| \\
		&\leq 2\fint_{\widetilde{B}} |f(x)- f_{\widetilde{B}}|dx +   \frac{2r_{ \widetilde{B}}}{r_B}  |f_{\widetilde{B}} - f_{2 B}|,
	\end{align*}
	where in the third inequality we used \eqref{eq-cutoff-Lipschitz} and $0\le h\le 1$.
	
Using \eqref{eq-boundary-log-comparison},
\[
\begin{aligned}
	\fint_{\widetilde{B}} |f(x)- f_{\widetilde{B}}|dx &\leq \frac{1}{\log \Big(e+ \frac{x^n_{\widetilde{B}}}{r_{ \widetilde{B}}}\Big)}\|f\|_{\mathrm{BMO}^{\log}}\\
	& \lesssim\frac{1}{\log \Big(e+ \frac{x^n_{{B}}}{r_{ {B}}}\Big)}\|f\|_{\mathrm{BMO}^{\log}}.
\end{aligned}
\]	
	It remains to show that 
	\begin{equation}
		\label{eq-average-difference-bound}
		\frac{r_{ \widetilde{B}}}{r_B}  |f_{\widetilde{B}} - f_{2 B}|\lesssim \frac{1}{\log \Big(e+ \frac{x^n_{{B}}}{r_{ {B}}}\Big)}\|f\|_{\mathrm{BMO}^{\log}}. 
	\end{equation}
	To do this, we write
	\[
	\begin{aligned}
		\frac{r_{ \widetilde{B}}}{r_B}  |f_{\widetilde{B}} - f_{2 B}| \le \frac{r_{ \widetilde{B}}}{r_B}  |f_{B(x_{\widetilde B}, r_{\widetilde B})} - f_{B(x_{\widetilde B}, 5r_{ B})}|+\frac{r_{ \widetilde{B}}}{r_B}  | f_{B(x_{\widetilde B}, 5r_{ B})}-f_{B(x_{B}, 2r_{ B})}|=:E+F.
	\end{aligned}
	\]
	Using the standard argument, 
	\[
	E \lesssim \frac{r_{ \widetilde{B}}}{r_B} \sum_{1\le j\le \lceil\log_2(5r_B/r_{\widetilde B})\rceil}   \frac{1}{\log \Big(e+ \frac{x^n_{\widetilde{B}}}{2^jr_{ {\widetilde B}}}\Big)}\|f\|_{\mathrm{BMO}^{\log}}.
	\]
	Since $2^jr_{ {\widetilde B}}\lesssim r_B$ for $1\le j\le \lceil\log_2(5r_B/r_{\widetilde B})\rceil$, we further obtain
	\[
	\begin{aligned}
		E&\lesssim \frac{r_{ \widetilde{B}}}{r_B} \sum_{1\le j\le \lceil\log_2(5r_B/r_{\widetilde B})\rceil}   \frac{1}{\log \Big(e+ \frac{x^n_{\widetilde{B}}}{r_B}\Big)}\|f\|_{\mathrm{BMO}^{\log}}\\
		&\lesssim \frac{r_{ \widetilde{B}}}{r_B}  \log_2\Big(\f{5r_B}{r_{\widetilde B}}\Big)    \frac{1}{\log \Big(e+ \frac{x^n_{\widetilde{B}}}{r_{ {B}}}\Big)}\|f\|_{\mathrm{BMO}^{\log}}\\
		&\lesssim \frac{1}{\log \Big(e+ \frac{x^n_{{B}}}{r_{ {B}}}\Big)}\|f\|_{\mathrm{BMO}^{\log}},
	\end{aligned}
	\]
	where in the last inequality we used \eqref{eq-boundary-log-comparison} and $\f{\log x}{x} \lesssim 1$ for $x\ge 1$.

	For the last term $F$, since $\widetilde B\cap 2B\ne \emptyset$ and $r_B>r_{\widetilde{B}}$, we have $2B\subset B(x_{\widetilde{B}},5r_B)$. Therefore,
	\[
	\begin{aligned}
		F&\le \frac{r_{ \widetilde{B}}}{r_B} \fint_{2B}|f(x)-f_{B(x_{\widetilde{B}},5r_B)}|\,dx\\
		&\lesssim \frac{r_{ \widetilde{B}}}{r_B} \fint_{B(x_{\widetilde{B}},5r_B)}|f(x)-f_{B(x_{\widetilde{B}},5r_B)}|\,dx\\
		&\lesssim \frac{1}{\log \Big(e+ \frac{x^n_{\widetilde{B}}}{r_{ {B}}}\Big)}\|f\|_{\mathrm{BMO}^{\log}}\\
		&\lesssim \frac{1}{\log \Big(e+ \frac{x^n_{{B}}}{r_{ {B}}}\Big)}\|f\|_{\mathrm{BMO}^{\log}},
		\end{aligned}
		\]
		where in the last inequality we used \eqref{eq-boundary-log-comparison}.
		
		This completes the proof.
\end{proof}

Using Lemma~\ref{lem-bmolog-JN}, we obtain the following estimate:
	\begin{lemma}\label{lem-bmolog-dilated-ball}
		Let $p\in [1,\infty)$. Then there exists a constant $C>0$ such that
		$$\Big( \fint_{2^j B} |f(x)- f_{B}|^p dx\Big)^{1/p}\leq C \frac{(j+1)}{\log\Big(e +\frac{x_B^n}{2^j r_B}\Big)}\|f\|_{\mathrm{BMO}^{\log}}$$
		for all $f\in \mathrm{BMO}^{\log}(\X)$, all balls $B$, and all $j\ge 0$. 
	\end{lemma}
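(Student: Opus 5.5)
The plan is to split $f-f_B$ on $2^jB$ into a local oscillation on the dilated ball and a telescoping chain of averages:
\[
\Big(\fint_{2^jB}|f-f_B|^p\Big)^{1/p}\le \Big(\fint_{2^jB}|f-f_{2^jB}|^p\Big)^{1/p}+\sum_{k=1}^{j}|f_{2^kB}-f_{2^{k-1}B}|.
\]
Throughout, we interpret $2^jB$ as a ball in $\X$, as the statement implicitly requires. The first term is bounded by Lemma~\ref{lem-bmolog-JN} applied to the ball $2^jB$. That ball has the same centre $x_B$ and radius $2^jr_B$, so this term is at most
\[
C\,\frac{\|f\|_{\mathrm{BMO}^{\log}}}{\log\big(e+x_B^n/(2^jr_B)\big)}.
\]

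For each telescoping term we use $2^{k-1}B\subset 2^kB$ and $|2^kB|=2^n|2^{k-1}B|$. This gives
\[
|f_{2^kB}-f_{2^{k-1}B}|\le 2^n\fint_{2^kB}|f-f_{2^kB}|\le 2^n\,\frac{\|f\|_{\mathrm{BMO}^{\log}}}{\log\big(e+x_B^n/(2^kr_B)\big)}
\]
by the definition \eqref{eq-bmolog-definition}. The key monotonicity observation is that $t\mapsto \log(e+x_B^n/t)$ is decreasing in $t$. Hence for $k\le j$,
\[
\frac{1}{\log\big(e+x_B^n/(2^kr_B)\big)}\le \frac{1}{\log\big(e+x_B^n/(2^jr_B)\big)}.
\]
Summing the $j$ telescoping terms therefore gives at most $j\,2^n\|f\|_{\mathrm{BMO}^{\log}}/\log\big(e+x_B^n/(2^jr_B)\big)$.

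Adding the two bounds yields the constant $C(j+1)$ claimed in Lemma~\ref{lem-bmolog-dilated-ball}. There is no real obstacle here. The only point needing care is the direction of the monotonicity: we must compare the logs at the smaller scales $2^kr_B$ against the largest scale $2^jr_B$, and not the other way round, so that every term is controlled by the weakest (largest-scale) logarithmic factor. The case $j=0$ is exactly Lemma~\ref{lem-bmolog-JN}.
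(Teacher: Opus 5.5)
Your proof is correct and follows essentially the same route as the paper. In both, Minkowski's inequality splits off the oscillation on $2^jB$, which is handled by Lemma~\ref{lem-bmolog-JN}. What remains is a telescoping chain of averages, each bounded by the $\mathrm{BMO}^{\log}$ mean oscillation on the larger ball. The monotonicity of $t\mapsto\log(e+x_B^n/t)$ then bounds all $j$ terms by the largest-scale factor $1/\log\big(e+x_B^n/(2^jr_B)\big)$.
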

	
	\begin{proof}
		We write
		\[
		\begin{aligned}
			\Big( \fint_{2^j B} |f(x)- f_{B}|^p \,dx\Big)^{1/p} 
			&\le \Big( \fint_{2^j B} |f(y)- f_{2^j B}|^p \,dy\Big)^{1/p} + \sum_{k=0}^{j-1} \Big|f_{2^{k+1}B}- f_{2^k B}\Big| \\
			&\le \Big( \fint_{2^j B} |f(y)- f_{2^j B}|^p \,dy\Big)^{1/p} + \sum_{k=0}^{j-1} \fint_{2^k B} \Big|f - f_{2^{k+1}B}\Big| \,dx \\
			&\lesssim \Big( \fint_{2^j B} |f(y)- f_{2^j B}|^p \,dy\Big)^{1/p} + \sum_{k=0}^{j-1} \fint_{2^{k+1} B} |f - f_{2^{k+1}B}| \,dx.
		\end{aligned}
		\]
		
		Clearly,
		\[
		\begin{aligned}
			\sum_{k=0}^{j-1} \fint_{2^{k+1}B} |f - f_{2^{k+1}B}| \,dx 
			&\lesssim \sum_{k=0}^{j-1} \frac{1}{\log\Big(e + \frac{x_B^n}{2^k r_B}\Big)} \|f\|_{\mathrm{BMO}^{\log}} \\
			&\lesssim \frac{j}{\log\Big(e + \frac{x_B^n}{2^j r_B}\Big)} \|f\|_{\mathrm{BMO}^{\log}}.
		\end{aligned}
		\]
		
		Moreover, applying Lemma \ref{lem-bmolog-JN}, we have
		\[
		\Big( \fint_{2^j B} |f(y)- f_{2^j B}|^p \,dy\Big)^{1/p} \lesssim \frac{1}{\log\Big(e + \frac{x_B^n}{2^j r_B}\Big)} \|f\|_{\mathrm{BMO}^{\log}}.
		\]
		
		Combining these two estimates completes the proof.
	\end{proof}

	The following lemma gives a characterization of $\mathrm{BMO}^{\log}$ that plays a crucial role in the proof of the main result.
	\begin{lemma}\label{lem-bmolog-characterization}
		Let $q\in (1,\infty]$  and $f\in \mathrm{BMO}(\X)$. Then, $f\in \mathrm{BMO}^{\log}(\X)$ if and only if $a(f-f_{B})\in H^1_r(\X)$ for all $(H^1_r,q)$-atoms $a$ associated with balls $B$. Moreover,
		$$\|f\|_{\mathrm{BMO}^{\log}(\X)}\simeq \|f\|_{\mathrm{BMO}(\X)}+ \sup_{\text{\rm $(H^1_r,q)$-atoms $a$ associated with $B$}} \|a(f-f_{B})\|_{H^1_r},$$
		where the supremum is taken over all $(H^1_r,q)$-atoms $a$.		
	\end{lemma}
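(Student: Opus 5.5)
The proof has two directions: sufficiency via the log-type molecules of Lemma~\ref{lem-log-molecule}, and necessity via duality with $\mathrm{BMO}_z(\X)$ tested against suitable $\mathrm{VMO}_z$-type functions.

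\emph{Sufficiency.} Assume $f\in\mathrm{BMO}^{\log}(\X)$ and let $a$ be an $(H^1_r,q)$-atom supported in $B$. Since $H^{1,q}_r$ does not depend on $q$, we may shrink $q$ slightly if needed and fix $1<s<q$. We show that $m:=a(f-f_B)/(C\|f\|_{\mathrm{BMO}^{\log}})$ is an $s$-atom of log-type associated with $B$ (take $s=q$ directly if $q<\infty$).

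For the size condition, H\"older's inequality with exponents $q/s$ and $(q/s)'$, together with Lemma~\ref{lem-bmolog-JN}, gives
\[
\|a(f-f_B)\|_{L^s(B)}\le \|a\|_{L^q}\Big(\int_B|f-f_B|^{r}\Big)^{1/r}\lesssim |B|^{1/s-1}\frac{\|f\|_{\mathrm{BMO}^{\log}}}{\log(e+x_B^n/r_B)},
\]
where $1/r=1/s-1/q$. This is even better than condition (a) requires.

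For the integral condition, $|\int a(f-f_B)|\le \|a\|_{L^{q}}\|f-f_B\|_{L^{q'}(B)}\lesssim \|f\|_{\mathrm{BMO}^{\log}}/\log(e+x_B^n/r_B)$, again by Lemma~\ref{lem-bmolog-JN}. This is exactly condition (b), and it holds regardless of whether $a$ has vanishing mean.

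Lemma~\ref{lem-log-molecule} then gives $\|a(f-f_B)\|_{H^1_r}\lesssim\|f\|_{\mathrm{BMO}^{\log}}$. Combined with Remark~\ref{rem-bmolog-embeds-bmo}, this yields the bound "$\gtrsim$" in the norm equivalence.

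\emph{Necessity.} Set $K:=\|f\|_{\mathrm{BMO}}+\sup_a\|a(f-f_B)\|_{H^1_r}$, and fix a ball $B$. If $x_B^n/r_B\lesssim 1$, the log factor is bounded, so $\log(e+x_B^n/r_B)\,\mathrm{MO}(f,B)\lesssim \|f\|_{\mathrm{BMO}}$ and there is nothing to prove. So assume $r_B\le x_B^n/8$, say.

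Let $g:=\sign(f-f_B)\chi_B-\alpha$, where the constant $\alpha$ is chosen so that $\int g\,\chi_B=0$. Then $a:=|B|^{-1}g\chi_B$ is, up to a constant, a cancellative $(H^1_r,\infty)$-atom on $B$, and
\[
\int_B a(f-f_B)=\frac{1}{|B|}\int_B|f-f_B|-\alpha\cdot 0=\mathrm{MO}(f,B).
\]
Note that the term involving $\alpha$ vanishes because $\int_B(f-f_B)=0$. This alone gives only $\mathrm{MO}(f,B)\lesssim K$, with no logarithmic gain.

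To extract the logarithm, test $a(f-f_B)\in H^1_r$ against an element of $\mathrm{BMO}_z=(H^1_r)^*$ that is large on $B$. The natural choice is
\[
\phi(x):=\log\Big(\frac{x^n}{r_B}\Big)_+,
\]
suitably truncated to be compactly supported and smoothed so that it lies in $\mathrm{VMO}_z$ with norm $\lesssim1$. This function vanishes near the boundary, so the zero-extension condition holds, and it satisfies $\phi\simeq\log(x_B^n/r_B)$ on $B$ with oscillation $\lesssim r_B/x_B^n$ there. We then write
\[
\log\Big(e+\frac{x_B^n}{r_B}\Big)\mathrm{MO}(f,B)\approx\int a(f-f_B)\,\phi+\text{error},
\]
where the error is $\int_B a(f-f_B)(\phi-\phi_B)$ and is bounded by $\|\phi-\phi_B\|_{L^\infty(B)}\,\mathrm{MO}$-type terms, hence by $\lesssim\|f\|_{\mathrm{BMO}}$. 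The main term is at most $\|a(f-f_B)\|_{H^1_r}\|\phi\|_{\mathrm{BMO}_z}\lesssim K$.

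\emph{Main obstacle.} The key step is verifying that this logarithmic test function has $\|\phi\|_{\mathrm{BMO}_z}\lesssim 1$ uniformly after truncation. I would truncate at height $x^n\le 2x_B^n$ and horizontal distance $\lesssim x_B^n$ in a Lipschitz-in-log fashion, and check the two suprema in Definition~\ref{def-bmoz} separately. Once this is in place, combining the two cases gives $\|f\|_{\mathrm{BMO}^{\log}}\lesssim K$, completing the equivalence.
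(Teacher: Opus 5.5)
Your sufficiency direction is essentially the paper's: Hölder with $1<s<q$, Lemma~\ref{lem-bmolog-JN} for the mean, then Lemma~\ref{lem-log-molecule}. Drop the parenthetical ``take $s=q$ if $q<\infty$'', though: then $r=\infty$, and $\|f-f_B\|_{L^\infty(B)}$ is not controlled by any BMO-type norm. Your necessity direction also follows the paper's structure: an extremal cancellative atom with $\int a(f-f_B)=\mathrm{MO}(f,B)$, paired with a $\mathrm{BMO}_z$ function of size $\log(x_B^n/r_B)$ on $B$. However, the test function you propose cannot work. The step you flag as the main obstacle is not merely hard; it is false as stated.

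The second supremum in Definition~\ref{def-bmoz} applies to $B':=B(x_B,x_B^n/4)$, so $\|\phi\|_{\mathrm{BMO}_z}\lesssim 1$ forces $\fint_{B'}|\phi|\lesssim 1$. You keep $\phi=\log(x^n/r_B)$ near $B$: you require oscillation $\lesssim r_B/x_B^n$ on $B$ and cut off only at height $\sim 2x_B^n$ and horizontal distance $\sim x_B^n$. Hence $\phi\ge\log(3x_B^n/(4r_B))$ on a fixed fraction of $B'$, and so $\fint_{B'}|\phi|\gtrsim\log(x_B^n/r_B)\to\infty$. Equivalently, the zero extension has mean oscillation $\gtrsim\log(x_B^n/r_B)$ on $B(x_B,2x_B^n)\subset\mathbb R^n$. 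The inference ``vanishes near the boundary, so the zero-extension condition holds'' is the faulty step: that condition controls averages over every ball whose radius is at least a quarter of its height, including balls reaching up to height $x_B^n$.

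More conceptually, if $\|\phi\|_{\mathrm{BMO}_z}\le 1$, the chain argument gives $|\phi_{B(x_B,\rho)}|\lesssim 1+\log(x_B^n/\rho)$ for $\rho\le x_B^n/4$. So $\phi$ can reach size $\log(x_B^n/r_B)$ on $B$ only by gaining $O(1)$ at each dyadic scale between $x_B^n$ and $r_B$. It must therefore be a logarithmic singularity concentrated at $B$, not a profile varying on scale $x_B^n$.

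This is exactly the paper's choice $g_{x_B}(x)=\max\{0,1+\log(x_B^n/|x-x_B|)\}$ from Lemma~\ref{lem-gx0}. It is supported in $B(x_B,e\,x_B^n)$, has uniformly bounded $\mathrm{BMO}_z$ norm, and satisfies $\log(e+x_B^n/r_B)\le 2g_{x_B}$ on $B$. Because $g_{x_B}$ is not nearly constant on $B$, the error term must be bounded by Hölder and John--Nirenberg applied to both $f$ and $g_{x_B}$ (the paper's term $E$), not by an $L^\infty$ bound on $\phi-\phi_B$. Alternatively, use $\min\{g_{x_B},1+\log(x_B^n/r_B)\}$: it is constant on $B$, has comparable $\mathrm{BMO}_z$ norm, and removes the error term altogether.
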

	We first record the following technical ingredient. For any $x_0\in\X$, we define $g_{x_0}:\X\to \R$ by
	\begin{equation}\label{eq-gx0-definition}
		g_{x_0}(x)=\max\Big\{0, 1+ \log\frac{x_0^n}{|x-x_0|}\Big\}.
	\end{equation}
	Then, we have:
	
	\begin{lemma}\label{lem-gx0}
		There exists a constant $C>0$ such that 
		$$\|g_{x_{0}}\|_{\mathrm{BMO}_z}\leq C\quad\text{for all } x_0\in\X.$$
		Moreover, if $B$ is a ball with $0<r_B<x_B^n$, then $\log\Big(e+\frac{x_B^n}{r_B}\Big)\leq 2\, g_{x_B}(x)$ for all $x\in B$.
	\end{lemma}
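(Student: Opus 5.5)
The statement to prove is Lemma~\ref{lem-gx0}. By the zero-extension description of $\mathrm{BMO}_z(\X)$ recalled after Definition~\ref{def-bmoz}, it suffices to show that the zero extension $G$ of $g_{x_0}$ to $\mathbb R^n$ satisfies $\|G\|_{\mathrm{BMO}(\mathbb R^n)}\le C$ uniformly in $x_0$. Write $d=x_0^n$. Then $g_{x_0}(x)=\phi(|x-x_0|/d)$, where
\[
\phi(t)=\max\{0,1-\log t\}.
\]
The support of $g_{x_0}$ is the closed ball $\overline{B(x_0,ed)}$. This ball leaves $\X$, so $G$ is $\phi(|x-x_0|/d)$ cut off by $\chi_{\{x^n>0\}}$. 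Since BMO norms are invariant under translations and dilations, I will normalize $x_0=(0,1)$ and $d=1$. The claim then reduces to a single fixed function
\[
G(x)=\phi(|x-e_n|)\,\chi_{\{x^n>0\}}(x)
\]
being in $\mathrm{BMO}(\mathbb R^n)$.

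The main step is to verify this for the fixed $G$, using the decomposition $G=G_1-G_2$ with
\[
G_1(x)=\phi(|x-e_n|),\qquad G_2=G_1\chi_{\{x^n\le 0\}}.
\]
The function $\log(1/|x|)$ is a classical BMO function. The truncation $\max\{0,1-\log t\}$ is a Lipschitz function of $\log(1/|x|)$, namely $\max(0,1+u)$, and Lipschitz maps preserve BMO, so $G_1\in\mathrm{BMO}(\mathbb R^n)$. The function $G_2$ is bounded: on $\{x^n\le0\}$ we have $|x-e_n|\ge1$, so $0\le G_2\le1$. Hence $G_2\in L^\infty\subset\mathrm{BMO}$, and therefore $G\in\mathrm{BMO}(\mathbb R^n)$.

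I do not expect a serious obstacle, since the dilation normalization is exact. The one thing to check is that the $\mathrm{BMO}_z$ norm is computed via the zero extension, which is uniformly equivalent by the cited result, and that the rescaling maps $\X$ to $\X$. It does: $x\mapsto (x-(x_0',0))/d$ preserves the half-space and sends $x_0$ to $e_n$. If one prefers to avoid the zero-extension equivalence, the alternative is to verify Definition~\ref{def-bmoz} directly. The oscillation part follows from $G_1\in\mathrm{BMO}$. For the averages over balls with $r_B\ge x_B^n/4$, those balls $B$ reach to within $5r_B$ of the boundary, and there $g_{x_0}\le 1+\log(1/|x-e_n|)$ has bounded average: either $B$ stays at distance at least $1/2$ from $e_n$, so the function is at most $1+\log 2$, or $r_B\gtrsim1$, and the integrability of $\log$ gives average at most $C$.

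For the second assertion, let $x\in B$ with $0<r_B<x_B^n$. Then $|x-x_B|<r_B$, so
\[
g_{x_B}(x)\ge 1+\log\frac{x_B^n}{r_B}>1 .
\]
Set $s=x_B^n/r_B>1$. We need $\log(e+s)\le 2(1+\log s)$. Since $e+s\le (e+1)s$,
\[
\log(e+s)\le \log(e+1)+\log s\le 2+2\log s,
\]
which proves the inequality.
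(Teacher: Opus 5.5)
Your proof is correct. For the first assertion it handles the boundary differently from the paper.

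Both arguments use that the positive part is $1$-Lipschitz to pass from $\log|\cdot-x_0|$ to $g_{x_0}$. The paper re-proves the uniform BMO bound for $\log|\cdot-x_0|$ by hand, choosing $c_B=\log|x_B-x_0|$ or $c_B=\log r_B$. You quote the classical fact together with dilation invariance.

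For the second half of the $\mathrm{BMO}_z$ norm, the paper works with Definition~\ref{def-bmoz} directly. It bounds $\fint_B g_{x_0}$ over balls with $r_B\ge x_B^n/4$ by splitting $B$ into two pieces. On $B\cap B(x_0,x_0^n/3)$ one has $r_B\gtrsim x_0^n$, so a scaling bound for the mean over $B(x_0,x_0^n)$ applies. On the rest, $g_{x_0}\le 1+\log 3$. Your fallback sketch is essentially this argument.

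Your main route instead invokes the zero-extension equivalence. You normalize to $x_0=e_n$ by an exact similarity of the half-space, and note that the part removed by the cutoff is bounded by $1$, since $|x-e_n|\ge 1$ on $\{x^n\le 0\}$. This is shorter and makes uniformity in $x_0$ automatic rather than something tracked through each estimate. The cost is that it relies on the zero-extension characterization of $\mathrm{BMO}_z$, which the paper only cites. The paper's argument is self-contained relative to the definition it actually uses.

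For the second assertion, the paper's appendix reduces the lemma to the two $\mathrm{BMO}_z$ estimates and never checks the pointwise inequality $\log(e+x_B^n/r_B)\le 2g_{x_B}(x)$ on $B$. Your computation with $s=x_B^n/r_B>1$ and $e+s\le(e+1)s$ gives $\log(e+s)\le\log(e+1)+\log s\le 2+2\log s$. This correctly supplies that missing step.
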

	We will give the proof of Lemma \ref{lem-gx0} in the appendix section. We are now ready to give the proof of Lemma \ref{lem-bmolog-characterization}.

	\begin{proof}[Proof of Lemma \ref{lem-bmolog-characterization}]
		Suppose that $f\in \mathrm{BMO}^{\log}(\X)$. By definition,
\[
\|f\|_{\mathrm{BMO}(\X)}\le \|f\|_{\mathrm{BMO}^{\log}(\X)}.
\]
		
		Choose $s>1$ so that $s<q$ when $q<\infty$ (and choose any $s>1$ when $q=\infty$), and let $r\in(1,\infty)$ be determined by
\[
\frac1s=\frac1q+\frac1r,
\]
with the convention $1/q=0$ when $q=\infty$. Then, for any $(H^1_r,q)$-atom $a$ associated with a ball $B$, H\"older's inequality and the John--Nirenberg inequality give
		\begin{equation}\label{eq-product-Ls}
			\begin{aligned}
				\|a(f-f_{B})\|_{L^{s}}&\le \|a\|_{L^q}\|f-f_{B}\|_{L^{r}(B)}\\
				&\lesssim |B|^{\frac1q-1}|B|^{\frac1r}\|f\|_{\mathrm{BMO}(\X)}\\
				&\lesssim \|f\|_{\mathrm{BMO}^{\log}(\X)}|B|^{\frac{1}{s}-1}.
			\end{aligned}
		\end{equation}
		Moreover, by H\"older's inequality and Lemma \ref{lem-bmolog-dilated-ball}, we obtain
		\begin{align*}
			\Big|\int_{\X} a(x)(f(x)-f_B)dx\Big|&\leq \|a(f-f_B)\|_{L^1(B)}\\
			&\leq \|a\|_{L^q(B)}\|f-f_B\|_{L^{q'}(B)}\\
			&\lesssim |B|^{\frac{1}{q}-1} |B|^{\frac{1}{q'}} \frac{1}{\log\Big(e+\frac{x_B^n}{r_B}\Big)}\|f\|_{\mathrm{BMO}^{\log}(\X)}\\
			&\lesssim \|f\|_{\mathrm{BMO}^{\log}(\X)}\frac{1}{\log\Big(e+\frac{x_B^n}{r_B}\Big)}.
		\end{align*}
		This, together with \eqref{eq-product-Ls} and $\operatorname{supp}(a(f-f_B))\subset B$, implies that $a(f-f_B)$ is a constant multiple of an $s$-atom of log-type associated with $B$. Hence, by Lemma~\ref{lem-log-molecule},
		\[
\|a(f-f_B)\|_{H^1_r(\X)}\lesssim \|f\|_{\mathrm{BMO}^{\log}(\X)}.
\]
		Thus,
		$$\|f\|_{\mathrm{BMO}(\X)}+ \sup_{\text{$(H^1_r,q)$-atoms $a$ associated with $B$}} \|a(f-f_{B})\|_{H^1_r(\X)}\lesssim \|f\|_{\mathrm{BMO}^{\log}(\X)}.$$
		
		Conversely, let $f\in \mathrm{BMO}(\X)$ such that 
		\[
		\sup_{\text{$(H^1_r,q)$-atoms $a$ associated with $B$}} \|a(f-f_{B})\|_{H^1_r(\X)}<\vc.
		\]
		It suffices to prove that
		\begin{equation}\label{eq-bmolog-converse}
			 \log\Big(e+\frac{x_{\widetilde{B}}^n}{r_{\widetilde{B}}}\Big) \mathrm{MO}(f, \widetilde{B} )\lesssim \|f\|_{\mathrm{BMO}(\X)}+ \sup_{\text{$(H^1_r,q)$-atoms $a$ associated with $B$}} \|a(f-f_{B})\|_{H^1_r(\X)}
		\end{equation}
		for all balls $\widetilde{B}$. 
		
		If  $r_{\widetilde{B}}\geq x_{\widetilde{B}}^n$, then
		\[
		\log\Big(e+\frac{x_{\widetilde{B}}^n}{r_{\widetilde{B}}}\Big) \mathrm{MO}(f, \widetilde{B} )\simeq \mathrm{MO}(f, \widetilde{B} )\lesssim \|f\|_{\mathrm{BMO}(\X)}.
		\]
		It remains to consider the case  $0<r_{\widetilde{B}}<x_{\widetilde{B}}^n$. To do this, denote by $1_{\widetilde{B}}$ the characteristic function of $\widetilde{B}$. Let $g_{x_{\widetilde{B}}}:\X\to \R$ be as in \eqref{eq-gx0-definition}, and set $a_0=\frac{1}{2|\widetilde{B}|}(h-h_{\widetilde{B}})1_{\widetilde{B}}$, where $h=\sign (f-f_{\widetilde{B}})$. Then $a_0$ is an $(H^1_r,\infty)$-atom associated with the ball $\widetilde{B}$; in particular, it is also an $(H^1_r,q)$-atom for every $q\in(1,\infty)$. Moreover, by Lemma \ref{lem-gx0}, we obtain 
		\begin{align*}
			\log\Big(e+&\frac{x_{\widetilde{B}}^n}{r_{\widetilde{B}}}\Big)\mathrm{MO}(f, \widetilde{B})=2 \log\Big(e+\frac{x_{\widetilde{B}}^n}{r_{\widetilde{B}}}\Big) \int_{\widetilde{B}} a_0(x) (f(x)-f_{\widetilde{B}}) dx\\
			&\leq 4 (g_{x_{\widetilde{B}}})_{\widetilde{B}} \int_{\X} a_0(x) (f(x)-f_{\widetilde{B}}) dx\\
			&\lesssim \int_{\widetilde{B}} |a_0(x)| |f(x)-f_{\widetilde{B}}| |g_{x_{\widetilde{B}}}(x)- (g_{x_{\widetilde{B}}})_{\widetilde{B}}| dx + \Big|\int_{\widetilde{B}} a_0(x) (f(x)-f_{\widetilde{B}}) g_{x_{\widetilde{B}}}(x) dx\Big|\\
			&=:E+F.
		\end{align*}
		Using H\"older's inequality, \eqref{eq-classical-JN} and Lemma \ref{lem-gx0}, 
		\[
		\begin{aligned}
			E&\lesssim \|a_0\|_{L^\vc(\widetilde B)}\|f-f_{\widetilde{B}}\|_{L^2(\widetilde B)}\|g_{x_{\widetilde{B}}}- (g_{x_{\widetilde{B}}})_{\widetilde{B}}\|_{L^2(\widetilde B)}\\
			&\lesssim \|f\|_{\mathrm{BMO}(\X)}\|g_{x_{\widetilde{B}}}\|_{\mathrm{BMO}(\X)}\\
			&\lesssim \|f\|_{\mathrm{BMO}(\X)}\|g_{x_{\widetilde{B}}}\|_{\mathrm{BMO}_z(\X)}\\
			&\lesssim \|f\|_{\mathrm{BMO}(\X)}.
		\end{aligned}
		\]
	By the duality result and Lemma  \ref{lem-gx0}, 	
		\[
		 \begin{aligned}
		 		F &\le  \|a_0(f-f_{\widetilde{B}})\|_{H^1_r} \|g_{x_{\widetilde{B}}}\|_{\mathrm{BMO}_z}\\
		&\lesssim   \sup_{\text{$(H^1_r,q)$-atom $a$ associated with $B$}} \|a(f-f_{B})\|_{H^1_r}.
	\end{aligned} 
		\]
		This completes the proof.
		
	\end{proof}

	\subsection{Basic properties of the Riesz transform}
In this section, we establish the boundedness of the Riesz transform $\mathfrak R=\nabla(-\Delta_D)^{-1/2}$ on $H^1_r(\mathbb R^n_+)$ and recall its Riesz-transform characterization.

	We first establish the kernel estimates needed below.
	\begin{lemma}
		\label{lem-heat-kernel-estimates}
		For all $x,y \in \X$ and $t>0$, we have
		
		\begin{enumerate}[\rm (a)]
			\item $\displaystyle p_{t}(x,y)
			\lesssim \frac{1}{t^{n/2}}
			e^{-\frac{|x-y|^2}{ct}}\Big(1+\f{\sqrt t}{ x^n }+\f{\sqrt t}{ y^n }\Big)^{-1}$;
			
			\medskip
			
			\item $\displaystyle |\nabla_x p_{t}(x,y)|\lesssim \f{1}{t^{(n+1)/2}}e^{-\f{|x-y|^2}{ct}}\Big(1+\f{\sqrt t}{y^n}\Big)^{-1}$;
			
			\medskip
			
			\item $\displaystyle |\nabla_x\nabla_y p_{t}(x,y)|
			\lesssim \frac{1}{t^{(n+2)/2}}
			e^{-\frac{|x-y|^2}{ct}}$.
			\end{enumerate}
		\end{lemma}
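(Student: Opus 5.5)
The estimates come from differentiating the explicit kernel
\[
p_t(x,y)=(4\pi t)^{-n/2}e^{-|x'-y'|^2/4t}\big(e^{-|x^n-y^n|^2/4t}-e^{-|x^n+y^n|^2/4t}\big)
\]
and extracting a factor that records the boundary. Put $G_t(z)=(4\pi t)^{-n/2}e^{-|z|^2/4t}$ and $\tilde y=(y',-y^n)$, so that $p_t(x,y)=G_t(x-y)-G_t(x-\tilde y)$. Since $x^n,y^n>0$ we have $|x-\tilde y|\ge |x-y|$. Hence every term obeys the Gaussian bound $t^{-n/2-k/2}e^{-|x-y|^2/ct}$ after $k$ derivatives, with $c>4$ absorbing the polynomial factors. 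This already gives (c), and it gives the Gaussian part of (a) and (b). What remains is the extra decay factor, which we only need in the regime $\sqrt t\ge y^n$ (and, for (a), also $\sqrt t\ge x^n$). When $\sqrt t< y^n$ the factor $(1+\sqrt t/y^n)^{-1}$ is comparable to $1$.

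For (a), write the difference in the normal variable as
\[
e^{-(x^n-y^n)^2/4t}-e^{-(x^n+y^n)^2/4t}=e^{-(x^n-y^n)^2/4t}\big(1-e^{-x^ny^n/t}\big).
\]
Then use $1-e^{-u}\le \min(1,u)$ with $u=x^ny^n/t$. The goal is $\min(1,x^ny^n/t)\,e^{-(x^n-y^n)^2/4t}\lesssim \min(1,y^n/\sqrt t)\,e^{-(x^n-y^n)^2/ct}$, together with the symmetric bound in $x^n$.
\begin{itemize}
\item If $x^n\le 2\sqrt t$, then $x^ny^n/t\le 2y^n/\sqrt t$, which is what we want.
\item If $x^n>2\sqrt t\ge 2y^n$, then $|x^n-y^n|\ge x^n/2\ge\sqrt t$, and $\min(1,u)\le 1\le \dots$ is not sufficient on its own. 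Instead bound $u\le (x^n/\sqrt t)(y^n/\sqrt t)$ and absorb $x^n/\sqrt t\lesssim |x^n-y^n|/\sqrt t$ into the Gaussian by enlarging $c$.
\end{itemize}
By symmetry this gives $\min(1,y^n/\sqrt t,x^n/\sqrt t)$ in general, and that minimum is comparable to $(1+\sqrt t/x^n+\sqrt t/y^n)^{-1}$.

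For (b), we need $|\nabla_x p_t|$ with the gain in $y^n$ only. The tangential derivatives $\partial_{x_i}$, $i<n$, only bring down the factor $-(x_i-y_i)/2t$, which is harmless. So the tangential case reduces to (a) times $t^{-1/2}$ (absorbing $|x'-y'|/\sqrt t$), and (a) contains the $y^n$ factor. The normal derivative is the delicate case, because the $x^n$-gain is lost after differentiation. Here I would write $\partial_{x^n}p_t=\partial_{x^n}[G_t(x-y)-G_t(x-\tilde y)]$ and view it as a difference in the $y^n$ variable:
\[
\partial_{x^n}p_t(x,y)=\int_{-y^n}^{y^n}\partial_s\,\partial_{x^n}G_t\big(x'-y',x^n-s\big)\,ds .
\]
By the mean value theorem this is bounded by $2y^n\sup_{|s|\le y^n}|\partial^2 G_t(x'-y',x^n-s)|$. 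That is at most $y^n t^{-(n+2)/2}e^{-(|x'-y'|^2+(x^n-s)^2)/ct}$.

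The main obstacle is comparing $(x^n-s)^2$ with $(x^n-y^n)^2$ uniformly for $|s|\le y^n$ in the regime $y^n\le\sqrt t$. This works because $|x^n-s|\ge |x^n-y^n|-2y^n\ge|x^n-y^n|-2\sqrt t$, so $e^{-(x^n-s)^2/ct}\lesssim e^{-(x^n-y^n)^2/c't}$. This yields $t^{-(n+1)/2}(y^n/\sqrt t)e^{-|x-y|^2/c't}$. Combined with the trivial bound when $y^n>\sqrt t$, this proves (b).
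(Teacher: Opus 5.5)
Your proof is correct.

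\textbf{Parts (a) and (c).} These follow essentially the paper's argument. The paper also factors $p_t=(4\pi t)^{-n/2}e^{-|x-y|^2/4t}(1-e^{-x^ny^n/t})$ and reduces to $\min\{x^n,y^n\}<\sqrt t$. It then assumes without loss of generality that $x^n$ is the minimum, and uses $x^ny^n/t\le x^n|x-y|/t+(x^n)^2/t$ to obtain the factor $x^n/\sqrt t$ in one step. You instead split at $x^n=2\sqrt t$ to get the $y^n$-gain and then symmetrize. The mechanism is the same: a linear bound on $1-e^{-u}$, with the leftover factor absorbed into the Gaussian.

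\textbf{Part (b).} Your route here genuinely differs. The paper only says ``similar to (a) by the product rule'', meaning you differentiate the factorized form. The term $-\frac{x^n-y^n}{2t}\,p_t$ is then controlled by (a). The remaining term, $(4\pi t)^{-n/2}e^{-|x-y|^2/4t}\,\frac{y^n}{t}e^{-x^ny^n/t}$, visibly carries the factor $y^n/\sqrt t$ when $y^n\le\sqrt t$.

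You instead keep the unfactored difference $G_t(x-y)-G_t(x-\tilde y)$ and integrate across the reflection in the variable $s\in[-y^n,y^n]$. This gains $2y^n$ times a second derivative of the Gaussian. The cost is comparing $(x^n-s)^2$ with $(x^n-y^n)^2$ uniformly for $|s|\le y^n\le\sqrt t$, which you handle correctly.

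The product-rule route avoids that shift comparison, but it relies on the Gaussian-specific identity $e^{-(x^n+y^n)^2/4t}=e^{-(x^n-y^n)^2/4t}e^{-x^ny^n/t}$. Your argument only uses derivative bounds on the profile $G_t$. It also makes transparent why only the $y^n$-gain survives an $x$-derivative: the gain comes from the odd reflection in $y^n$, not from any vanishing at $x^n=0$.
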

		\begin{proof} 
			The estimate (c) is straightforward and hence we omit the details. 
			
			For the estimates (a) and (b), we first rewrite the heat kernel as
			\[
			p_{t}(x,y)
			= \frac{1}{(4\pi t)^{n/2}}
			e^{-\frac{|x-y|^2}{4t}}
			\Big(
			1-
			e^{-\frac{ x^n y^n}{t}}
			\Big).
			\]
			Obviously,
			\[
			p_{t}(x,y)\le \frac{1}{(4\pi t)^{n/2}}
			e^{-\frac{|x-y|^2}{4t}}.
			\]
			Hence, we need only to prove (a) for the case $\min\{x^n,y^n\}<\sqrt t$. Without loss of generality, we may assume that $\min\{x^n,y^n\}=x^n<\sqrt t$. 		Using the inequality $1-
			e^{-\frac{ x^n y^n}{t}}\lesssim \frac{ x^n y^n}{t}$, we further obtain  
			\[
			\begin{aligned}
				p_{t}(x,y)&\lesssim \frac{1}{(4\pi t)^{n/2}}
				e^{-\frac{|x-y|^2}{4t}}\frac{ x^n y^n}{t}\\
				&\lesssim \frac{1}{(4\pi t)^{n/2}}
				e^{-\frac{|x-y|^2}{4t}}\frac{ x^n |y^n-x^n|}{t} +\frac{1}{(4\pi t)^{n/2}}
				e^{-\frac{|x-y|^2}{4t}}\frac{ (x^n)^2 }{t}\\
				&\lesssim \frac{1}{(4\pi t)^{n/2}}
				e^{-\frac{|x-y|^2}{4t}}\frac{ x^n |x-y|}{t} +\frac{1}{(4\pi t)^{n/2}}
				e^{-\frac{|x-y|^2}{4t}}\frac{ x^n }{\sqrt t}\\
				&\lesssim  
				\frac{1}{(4\pi t)^{n/2}}e^{-\frac{|x-y|^2}{8t}}\frac{ x^n }{\sqrt t}.
			\end{aligned}
			\]
			This completes the proof of (a).
			
			The proof of (b) is similar to that of (a) by using the product rule and we omit the details.
			
			This completes the proof.
		\end{proof}
			\begin{lemma}\label{lem-Riesz-kernel}
			Let $\mathfrak{R}(x,y)$ be the associated kernel of $\mathfrak{R}$. Then we have
			\[
			|\mathfrak{R}(x,y)|\lesssim \f{1}{|x-y|^n}   \f{y^n}{|x-y|}, \ \ x\ne y
			\]
			and
			\[
			|\nabla_y\mathfrak{R}(x,y)|\lesssim \f{1}{|x-y|^{n+1}}, \ \ x\ne y.
			\]
		\end{lemma}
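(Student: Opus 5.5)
Our plan is to write the kernel through the subordination formula
\[
(-\Delta_D)^{-1/2}=\frac{1}{\sqrt\pi}\int_0^\infty e^{t\Delta_D}\,\frac{dt}{\sqrt t},
\qquad\text{so that}\qquad
\mathfrak R(x,y)=\frac{1}{\sqrt\pi}\int_0^\infty \nabla_x p_t(x,y)\,\frac{dt}{\sqrt t},
\]
and then to integrate the pointwise bounds of Lemma~\ref{lem-heat-kernel-estimates} in $t$. Convergence is not an issue for $x\neq y$, because the Gaussian factor kills small $t$ and the powers of $t$ give decay for large $t$; this also justifies differentiating under the integral sign. The same representation gives $\nabla_y\mathfrak R(x,y)=\pi^{-1/2}\int_0^\infty \nabla_y\nabla_x p_t(x,y)\,t^{-1/2}\,dt$.

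\textbf{Gradient bound.} This part is immediate from Lemma~\ref{lem-heat-kernel-estimates}(c). We have
\[
|\nabla_y\mathfrak R(x,y)|\lesssim \int_0^\infty t^{-(n+3)/2}e^{-|x-y|^2/(ct)}\,dt \simeq |x-y|^{-(n+1)}.
\]
The equivalence follows from the substitution $t=|x-y|^2 s$.

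\textbf{Size bound.} Here we use Lemma~\ref{lem-heat-kernel-estimates}(b) together with the elementary inequality $(1+\sqrt t/y^n)^{-1}\le y^n/\sqrt t$. This gives
\[
|\mathfrak R(x,y)|\lesssim y^n\int_0^\infty t^{-(n+1)/2}\,t^{-1/2}\,t^{-1/2}\,e^{-|x-y|^2/(ct)}\,dt
= y^n\int_0^\infty t^{-(n+3)/2}e^{-|x-y|^2/(ct)}\,dt\simeq \frac{y^n}{|x-y|^{n+1}},
\]
which is the stated estimate.

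\textbf{Main obstacle.} The real content sits in Lemma~\ref{lem-heat-kernel-estimates}(b), whose proof was only sketched, so I would verify it. Write
\[
p_t(x,y)=(4\pi t)^{-n/2}e^{-|x-y|^2/4t}\bigl(1-e^{-x^ny^n/t}\bigr).
\]
Applying $\nabla_x$ gives two terms:
\begin{itemize}
\item When the derivative falls on the Gaussian, we get a factor $|x-y|/t\lesssim t^{-1/2}$ after shrinking the Gaussian. This term is then bounded by $t^{-1/2}$ times the estimate from part (a), and that estimate already contains the factor $\min(1,y^n/\sqrt t)$.
\item When the derivative falls on $1-e^{-x^ny^n/t}$, we get $(y^n/t)\,e^{-x^ny^n/t}\lesssim t^{-1/2}\,(y^n/\sqrt t)$. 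We also need the bound $t^{-1/2}$ for this term. Write $a=x^n/\sqrt t$ and $b=y^n/\sqrt t$, so that $(y^n/\sqrt t)e^{-x^ny^n/t}=be^{-ab}$.
\begin{itemize}
\item If $b\le 1$, then $be^{-ab}\le 1$ trivially.
\item If $b>1$ and $a\gtrsim b$, then $be^{-ab}\le be^{-cb^2}\lesssim 1$.
\item If $a\ll b$, then $|x-y|\ge y^n-x^n\gtrsim y^n$. The Gaussian factor then absorbs the extra $b$ through $be^{-b^2/c'}\lesssim 1$.
\end{itemize}
\end{itemize}
Combining the two terms yields the factor $(1+\sqrt t/y^n)^{-1}$. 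With this established, both kernel bounds follow by the $t$-integration above.
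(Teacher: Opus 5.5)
Your proposal is correct and follows essentially the same route as the paper: the subordination formula $(-\Delta_D)^{-1/2}=c\int_0^\infty t^{-1/2}e^{t\Delta_D}\,dt$, then integrating Lemma~\ref{lem-heat-kernel-estimates}(b) with $(1+\sqrt t/y^n)^{-1}\le y^n/\sqrt t$ for the size bound, and Lemma~\ref{lem-heat-kernel-estimates}(c) for the gradient bound. Your verification of part (b) via the $b e^{-ab}$ case analysis is sound, and it supplies details that the paper omits.
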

		\begin{proof}
			
			Using Lemma \ref{lem-heat-kernel-estimates}(b) and the subordination formula
			\begin{equation}\label{eq-subordination}
			(-\Delta_D)^{-1/2} = c\int_0^\vc \sqrt t e^{t\Delta_D} \f{dt}{t},
			\end{equation} we have
			\[
			\begin{aligned}
				|\mathfrak{R}(x,y)|&=c\Big|\int_0^\vc \sqrt t \nabla_x p_t(x,y)\f{dt}{t}\Big|\\
				&\lesssim \int_0^\vc \f{1}{t^{n/2}}e^{-\f{|x-y|^2}{ct}} \f{y^n}{\sqrt t} \f{dt}{t}\\
				&\lesssim \f{1}{|x-y|^n} \f{y^n}{|x-y|} .
			\end{aligned}
			\]

			The second estimate follows from Lemma \ref{lem-heat-kernel-estimates}(c) and the expression
			\[
			\mathfrak{R}(x,y) =c \int_0^\vc \sqrt t \nabla_x p_t(x,y)\f{dt}{t}.
			\]
			This completes the proof.
		\end{proof}

	\begin{proposition}\label{prop-R-Hr-bounded}
		The Riesz transform $\mathfrak R$ is bounded on $H^1_r(\X)$.
	\end{proposition}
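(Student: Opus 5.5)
The plan is to prove the bound atom by atom. We show that for every $(H^1_r,2)$-atom $a$ supported in a ball $B$, the function $\mathfrak R a$ is a uniform constant multiple of a $(2,\varepsilon)$-molecule of log-type associated with a dilate of $B$ (say $B$ itself, or $4B$). Lemma~\ref{lem-log-molecule} then gives $\|\mathfrak R a\|_{H^1_r}\lesssim 1$. To pass from atoms to all of $H^1_r(\X)$ we use the $L^2$-boundedness of $\mathfrak R$, which holds because $\mathfrak R^*\mathfrak R = I$ on $L^2$. For finite atomic sums this gives the bound directly. For general $f$ we either use density of finite sums in $H^1_r$, or the duality $({\rm VMO}_z)^*=H^1_r$ of Proposition~\ref{prop-vmoz-duality}, which lets us pass to limits.

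For the size condition (a) of Definition~\ref{def-log-molecule}, we split $\mathfrak R a$ into the pieces on $4B$ and on $S_j(B)$ for $j\ge 3$. On $4B$ the $L^2$-boundedness gives $\|\mathfrak R a\|_{L^2}\le \|a\|_{L^2}\le |B|^{-1/2}$. For $x\in S_j(B)$, $j\ge3$, we distinguish the two types of atoms.
\begin{itemize}
\item \emph{Cancellative atoms} ($r_B\le x_B^n/4$, $\int a=0$). Subtract $\mathfrak R(x,x_B)$ and apply the gradient bound $|\nabla_y\mathfrak R(x,y)|\lesssim |x-y|^{-n-1}$ of Lemma~\ref{lem-Riesz-kernel}. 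This gives $|\mathfrak R a(x)|\lesssim r_B/(2^jr_B)^{n+1}$, hence $\|\mathfrak R a\|_{L^2(S_j(B))}\lesssim 2^{-j}|2^jB|^{-1/2}$.
\item \emph{Boundary atoms} ($x_B^n/4<r_B<x_B^n/2$). Use the first bound of Lemma~\ref{lem-Riesz-kernel}, $|\mathfrak R(x,y)|\lesssim y^n/|x-y|^{n+1}$. Since $y^n\lesssim x_B^n\simeq r_B$ on $B$ and $\|a\|_{L^1}\le 1$, we again get $|\mathfrak R a(x)|\lesssim r_B/(2^jr_B)^{n+1}$.
\end{itemize}
In both cases condition (a) holds with $\varepsilon=1$.

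Condition (b) needs $|\int \mathfrak R a|\lesssim 1/\log(e+x_B^n/r_B)$. For boundary atoms the log factor is $\simeq 1$, so it suffices that $\mathfrak R a\in L^1$ with $\|\mathfrak R a\|_{L^1}\lesssim1$, which follows from the size estimates above. The main obstacle is the cancellative atoms lying far from the boundary, where we must show that $\int\mathfrak R a$ is small. We expect in fact $\int_{\X}\mathfrak R_j a=0$, or at least a bound like $\lesssim (r_B/x_B^n)^{\delta}$, and we would try the following two routes in turn.
\begin{enumerate}[(1)]
\item \emph{Kernel computation.} Write $\int_{\X}\mathfrak R a(x)\,dx=\int a(y)K(y)\,dy$ with $K(y)=\int_{\X}\mathfrak R(x,y)\,dx$. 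This integral converges absolutely, since $|\mathfrak R(x,y)|\lesssim y^n|x-y|^{-n-1}$. Using $\int a=0$, it then suffices to show $|\nabla K(y)|\lesssim 1/y^n$, which gives $|\int\mathfrak R a|\lesssim r_B/x_B^n\lesssim 1/\log(e+x_B^n/r_B)$. To compute $K$, integrate the subordination formula \eqref{eq-subordination} in $x$. For the tangential components $j<n$, $\int\partial_{x_j}p_t(x,y)\,dx=0$ by translation invariance in $x'$, so $K=0$. For $j=n$, $\int_{\X}\partial_{x^n}p_t(x,y)\,dx=-p_t$ evaluated at the boundary, which is zero by the Dirichlet condition. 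Hence $K\equiv0$ in all components.
\item \emph{Reduction to a smaller atom.} If route (1) fails, control $|\int\mathfrak R a|$ by $\|\mathfrak R a\|_{L^1(\X\setminus B(x_B,x_B^n/2))}$ after comparing with an atom of boundary type. Here one needs the extra boundary decay coming from the factor $y^n$ in the kernel bound.
\end{enumerate}
Justifying the interchange of integrals in route (1) (Fubini together with the subordination formula) is the delicate technical point.
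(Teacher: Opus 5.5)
The size estimates in your proposal are correct; they are Lemma~\ref{lem-Ra-estimates}(i) with $q=2$. Reducing to log-type molecules is also a legitimate strategy. The gap is condition (b) of Definition~\ref{def-log-molecule} for cancellative atoms with $r_B\ll x_B^n$, which is the only step that is not routine Calder\'on--Zygmund bookkeeping, and it is not established. Route (1) rests on two interchanges that fail as stated.

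First, $K(y)=\int_{\X}\mathfrak R(x,y)\,dx$ does not converge absolutely. Near the diagonal, $\mathfrak R(x,y)$ behaves like the classical Riesz kernel $c_n(x_j-y_j)|x-y|^{-n-1}$, which is not locally integrable. The bound $y^n|x-y|^{-n-1}$ you quote is even larger there, so it only controls the tail at infinity. Hence $\int_{\X}\mathfrak R a=\int aK$ is not a Fubini identity; moreover, for $x\in\operatorname{supp}a$, $\mathfrak Ra(x)$ is not given by the kernel integral at all.

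Second, in the subordination formula \eqref{eq-subordination}, $\int_{\X}|\nabla_x p_t(x,y)|\,dx$ is comparable to $t^{-1/2}$ when $\sqrt t\ll y^n$. So $\int_0^\infty\sqrt t\int_{\X}|\nabla_x p_t(x,y)|\,dx\,\frac{dt}{t}$ diverges logarithmically at $t=0$. Your per-$t$ identity $\int_{\X}\nabla_x p_t(x,y)\,dx=0$ is correct (translation invariance in $x'$, and $p_t=0$ on $\partial\X$). It yields $K\equiv 0$ only as a formal principal-value statement, and route (2) is only a sketch.

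The identity you want, $\int_{\X}\mathfrak R a=0$, is nevertheless true for every atom, and there are two ways to prove it.

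\emph{Truncation.} To stay with your computation, put $\mathfrak R^{\epsilon}=c\int_\epsilon^\infty\sqrt t\,\nabla e^{t\Delta_D}\frac{dt}{t}$ for $\epsilon>0$. Since $\int_\epsilon^\infty\min\{1,y^n/\sqrt t\}\frac{dt}{t}<\infty$, Fubini now applies and your slice computation gives $\int_{\X}\mathfrak R^\epsilon a=0$. By the spectral theorem, $\mathfrak R^\epsilon a\to\mathfrak R a$ in $L^2$. The kernels of $\mathfrak R^\epsilon$ obey the bounds of Lemma~\ref{lem-Riesz-kernel} uniformly in $\epsilon$. So off $4B$ the functions $\mathfrak R^\epsilon a$ are dominated by $Cr_B|x-x_B|^{-n-1}$ and converge pointwise. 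Hence $\mathfrak R^\epsilon a\to\mathfrak Ra$ in $L^1$, and $\int_{\X}\mathfrak Ra=0$.

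\emph{Odd extension.} Let $\widetilde a$ be the odd extension of $a$ and $R_j$ the classical Riesz transform; then $\mathfrak R_j a=(R_j\widetilde a)|_{\X}$. For $j=n$, $R_n\widetilde a$ is even in $x^n$ and has integral zero over $\mathbb R^n$. For $j<n$, $\widehat{R_j\widetilde a}$ vanishes on $\{\xi'=0\}$, so every horizontal slice integral of $R_j\widetilde a$ vanishes.

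Either way, the logarithm in Lemma~\ref{lem-log-molecule} is not even needed: the zero extension of $\mathfrak Ra$ is a classical mean-zero $H^1(\mathbb R^n)$ molecule.

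The second repair is exactly the identity \eqref{eq-odd-extension-riesz} on which the paper's proof rests. The paper uses that odd extension maps $H^1_r(\X)$ boundedly into $H^1(\mathbb R^n)$: the odd reflection of a non-cancellative atom has mean zero and lives in a comparable ball centred on $\partial\X$. It then applies the $H^1(\mathbb R^n)$-boundedness of $\nabla(-\Delta)^{-1/2}$ and restricts, with no molecules and no integral computation. Once (b) is repaired, what your route adds is a kernel-based argument that makes the extra cancellation $\int_{\X}\mathfrak Ra=0$ explicit.
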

		\begin{proof}
		Let $f\in H^1_r(\R^n_+)$. Denote by $\widetilde f$ the odd extension of $f$ to $\mathbb R^n$, i.e.,
		\[
		\widetilde f(x',x^n) = \begin{cases}
			f(x',x^n), & x^n>0,\\
			-f(x',-x^n), & x^n<0. 
		\end{cases}
		\]
		Then $\widetilde f\in H^1(\R^n)$ with $\|\widetilde f\|_{H^1(\R^n)}\lesssim \|f\|_{H^1_r(\R^n_+)}$. Moreover, 
		\begin{equation}\label{eq-odd-extension-riesz}
		\nabla (-\Delta_D)^{-1/2} f(x)=\nabla(-\Delta)^{-1/2}\widetilde f(x), \qquad x\in\R^n_+.
		\end{equation}
		Since the classical Riesz transform $\nabla (-\Delta)^{-1/2}$ is bounded on the Hardy space $H^1(\mathbb R^n)$, $\nabla (-\Delta)^{-1/2}\widetilde f\in H^1(\R^n)$. This, together with \eqref{eq-odd-extension-riesz}, implies $\nabla (-\Delta_D)^{-1/2} f\in H^1_r(\R^n_+)$; moreover,
		\[
		\|\nabla (-\Delta_D)^{-1/2}  f\|_{H^1_r(\R^n_+)} \le \|\nabla (-\Delta)^{-1/2}\widetilde f\|_{H^1(\R^n)}\lesssim \|\widetilde f\|_{H^1(\R^n)}\lesssim \|f\|_{H^1_r(\R^n_+)}.
		\] 
		
		This completes the proof.
	\end{proof}

	\begin{proposition}\label{prop-riesz-characterization}
		There exists a constant $C>1$ such that 
		\[
C^{-1}\|f\|_{H^1_r}
\leq \|f\|_{L^1}+\sum_{j=1}^{n}\|\mathfrak R_j(f)\|_{L^1}
\leq C\|f\|_{H^1_r},
\qquad f\in H^1_r(\X).
\]
	\end{proposition}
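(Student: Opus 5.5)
The plan is to transfer everything to $\mathbb R^n$ through the odd extension $\widetilde f$, as in the proof of Proposition~\ref{prop-R-Hr-bounded}. The upper bound is quick. By Proposition~\ref{prop-R-Hr-bounded} we have $\|\mathfrak R_j f\|_{H^1_r}\lesssim\|f\|_{H^1_r}$, and $\|g\|_{L^1(\X)}\le\|g\|_{H^1_r(\X)}$ for every $g\in H^1_r(\X)$, since $g$ is the restriction of some $G\in H^1(\R^n)$ with $\|G\|_{L^1(\R^n)}\le\|G\|_{H^1(\R^n)}$. Together these give $\|f\|_{L^1}+\sum_j\|\mathfrak R_jf\|_{L^1}\lesssim\|f\|_{H^1_r}$.

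For the lower bound, take $f\in H^1_r(\X)$, so in particular $f\in L^1(\X)$, and let $\widetilde f$ be its odd extension. Then $\|\widetilde f\|_{L^1(\R^n)}=2\|f\|_{L^1}$. By \eqref{eq-odd-extension-riesz}, the classical Riesz transforms $R_j=\partial_j(-\Delta)^{-1/2}$ satisfy $R_j\widetilde f=\mathfrak R_jf$ on $\X$. On the lower half-space we use the reflection $x\mapsto\bar x:=(x',-x^n)$. The classical Riesz kernel $c_n(x_j-y_j)/|x-y|^{n+1}$ satisfies $K_j(\bar x,\bar y)=K_j(x,y)$ for $j<n$ and $K_n(\bar x,\bar y)=-K_n(x,y)$. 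Since $\widetilde f(\bar y)=-\widetilde f(y)$, it follows that $R_j\widetilde f$ is odd in $x^n$ for $j<n$ and $R_n\widetilde f$ is even in $x^n$. In either case $|R_j\widetilde f(\bar x)|=|R_j\widetilde f(x)|$, hence
\[
\|R_j\widetilde f\|_{L^1(\R^n)}=2\|\mathfrak R_jf\|_{L^1(\X)}.
\]
The Riesz-transform characterization of $H^1(\R^n)$ (Fefferman--Stein) then gives
\[
\|\widetilde f\|_{H^1(\R^n)}\simeq\|\widetilde f\|_{L^1}+\sum_j\|R_j\widetilde f\|_{L^1}\lesssim\|f\|_{L^1}+\sum_j\|\mathfrak R_jf\|_{L^1}.
\]
Since $\widetilde f|_{\X}=f$, the identification $H^1_r=H^1_{\rm res}$ yields $\|f\|_{H^1_r}\le\|\widetilde f\|_{H^1(\R^n)}$, which is the lower bound.

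The main obstacle is justifying these manipulations for a general $f\in H^1_r$, or for $f\in L^1$ with $\mathfrak R_jf\in L^1$. The identity \eqref{eq-odd-extension-riesz} and the parity claims must hold in the distributional sense for $L^1$ data, and the Fefferman--Stein characterization must be applied to an $L^1$ function whose Riesz transforms (defined as distributions, or via the Poisson extension) lie in $L^1$. I would handle this by verifying \eqref{eq-odd-extension-riesz} first for nice $f$ using the kernel formula $p_t(x,y)=h_t(x-y)-h_t(x-\bar y)$ together with the subordination formula \eqref{eq-subordination}, which shows that $e^{t\Delta_D}f=(e^{t\Delta}\widetilde f)|_{\X}$. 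I would then extend to $f\in L^1$ by density, using the weak-type $(1,1)$ bounds of both Riesz transforms, so that the identity holds almost everywhere. No further input is needed, because the quantity on the right of the statement is assumed finite.
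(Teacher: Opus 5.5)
Your proposal is correct and follows the paper's proof. The upper bound comes from Proposition~\ref{prop-R-Hr-bounded} together with $H^1_r\subset L^1$, and the lower bound comes from the odd extension, the Fefferman--Stein characterization on $\R^n$, and restriction. Your parity computation ($R_j\widetilde f$ odd in $x^n$ for $j<n$, $R_n\widetilde f$ even) justifies the step $\|f\|_{L^1}+\sum_j\|\mathfrak R_jf\|_{L^1}\simeq\|\widetilde f\|_{L^1}+\sum_j\|R_j\widetilde f\|_{L^1}$, which the paper leaves implicit.

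Applying Fefferman--Stein to a general $L^1$ function is not actually an obstacle here. For $f\in H^1_r(\X)$ you already have $\widetilde f\in H^1(\R^n)$, because odd extensions of $(H^1_r,q)$-atoms are bounded multiples of classical atoms, as used in the proof of Proposition~\ref{prop-R-Hr-bounded}. So the two-sided equivalence applies to $\widetilde f$ directly.
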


	\begin{proof} Let $f\in H^1_r(\R^n_+)$. From Proposition \ref{prop-R-Hr-bounded} and the fact $H^1_r(\R^n_+)\subset L^1(\R^n_+)$,
		\[
		\|f\|_{L^1}+\sum_{j=1}^{n} \|\mathfrak R_j(f)\|_{L^1}\leq C \|f\|_{H^1_r}.
		\]
		It suffices to prove 
		\[
		\|f\|_{H^1_r}\lesssim \|f\|_{L^1}+\sum_{j=1}^{n} \|\mathfrak R_j(f)\|_{L^1}, \ \ f\in H^1_r(\R_+^n).
		\]
		Denote by $\widetilde f$ the odd extension of $f$ to $\mathbb R^n$ as in the proof of Proposition \ref{prop-R-Hr-bounded}. Then we have
		\[
		\|\nabla (-\Delta_D)^{-1/2} f\|_{L^1}+\| f\|_{L^1} \simeq  \|\nabla (-\Delta)^{-1/2}\widetilde f\|_{L^1}+\|\widetilde f\|_{L^1}\gtrsim \|\widetilde f\|_{H^1}\ge \| f\|_{H^1_r}. 
		\]
		This completes the proof.
	\end{proof}
	
We conclude this section by proving the following estimates.
	
	\begin{lemma}\label{lem-Ra-estimates}
		Let $1<q, s<\infty$. Then there exists a constant $C>0$ such that 
		\begin{enumerate}[\rm (i)]
			\item\label{item-Ra-offdiagonal} for all $(H^1_r,q)$-atoms $a$ associated with balls $B$, and all  $j\ge 0$,
			\[\|\mathfrak{R}a\|_{L^q(S_j(B))}\leq C 2^{-j} |2^jB|^{\frac{1}{q}-1}.\]
			
			\item\label{item-bRa-L1} for all $(H^1_r,s)$-atoms $a$ associated with balls $B$, and all $b\in \mathrm{BMO}(\X)$,
			\[\|(b-b_B)\mathfrak{R}a\|_{L^1}\leq C \|b\|_{\mathrm{BMO}}.\]
		\end{enumerate}
	\end{lemma}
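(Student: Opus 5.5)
For part (i) I will split according to $j$. For the near part $j\le 2$, the $L^q$-boundedness of $\mathfrak R$ (a Calder\'on--Zygmund operator) gives
\[
\|\mathfrak R a\|_{L^q(S_j(B))}\lesssim \|a\|_{L^q}\le |B|^{1/q-1}\simeq 2^{-j}|2^jB|^{1/q-1}.
\]
For $j\ge 3$ and $x\in S_j(B)$ we have $|x-y|\simeq 2^jr_B$ for all $y\in B$. I then distinguish the two kinds of atoms.

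If $a$ has mean zero, I write
\[
\mathfrak Ra(x)=\int_B[\mathfrak R(x,y)-\mathfrak R(x,x_B)]a(y)\,dy
\]
and use the gradient bound $|\nabla_y\mathfrak R(x,y)|\lesssim|x-y|^{-n-1}$ from Lemma~\ref{lem-Riesz-kernel}. Together with $\|a\|_{L^1}\le 1$ this gives
\[
|\mathfrak Ra(x)|\lesssim \frac{r_B}{(2^jr_B)^{n+1}}=2^{-j}|2^jB|^{-1}.
\]
If instead $x_B^n/4<r_B<x_B^n/2$, then every $y\in B$ satisfies $y^n\le x_B^n+r_B\lesssim r_B$. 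The first estimate of Lemma~\ref{lem-Riesz-kernel} then gives
\[
|\mathfrak R(x,y)|\lesssim |x-y|^{-n}\,\frac{r_B}{2^jr_B},
\]
which leads to the same pointwise bound. In both cases, taking the $L^q$ norm over $S_j(B)$, whose measure is about $|2^jB|$, yields $2^{-j}|2^jB|^{1/q-1}$. The boundary factor $y^n/|x-y|$ is the only place where the Dirichlet structure enters; this is the step that replaces the missing cancellation, so it is the key point, though it is elementary.

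For part (ii) I decompose over the annuli:
\[
\|(b-b_B)\mathfrak Ra\|_{L^1}=\sum_{j}\|(b-b_B)\mathfrak Ra\|_{L^1(S_j(B))}.
\]
On each annulus I apply H\"older with exponents $s$ and $s'$, and use (i) with $q=s$:
\[
\|(b-b_B)\mathfrak Ra\|_{L^1(S_j(B))}\le \|\mathfrak Ra\|_{L^s(S_j(B))}\,\|b-b_B\|_{L^{s'}(2^jB)}\lesssim 2^{-j}|2^jB|^{1/s-1}\,\|b-b_B\|_{L^{s'}(2^jB)}.
\]
By \eqref{eq-classical-JN} and \eqref{eq-BMO-dilated-averages},
\[
\Big(\fint_{2^jB}|b-b_B|^{s'}\Big)^{1/s'}\lesssim (j+1)\|b\|_{\mathrm{BMO}},
\]
so $\|b-b_B\|_{L^{s'}(2^jB)}\lesssim (j+1)|2^jB|^{1/s'}\|b\|_{\mathrm{BMO}}$. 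Since $|2^jB|^{1/s-1}|2^jB|^{1/s'}=1$, the $j$-th term is $\lesssim (j+1)2^{-j}\|b\|_{\mathrm{BMO}}$, and summing over $j$ gives $C\|b\|_{\mathrm{BMO}}$.

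One technical point needs care: $2^jB$ may leave $\X$. I will treat all sets as intersected with $\X$. The BMO facts still apply because the balls of $\X$ in the paper's convention are balls in $\R^n$ intersected with $\X$, which are doubling; alternatively one can pass through the even extension of $b$, which lies in $\mathrm{BMO}(\R^n)$. I regard this as routine bookkeeping.
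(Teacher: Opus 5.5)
Your proposal is correct and follows the paper's proof essentially step for step. Near annuli are handled by $L^q$-boundedness. Far annuli use the $y$-gradient bound in the cancellative case and the boundary factor $y^n/|x-y|$ from Lemma~\ref{lem-Riesz-kernel} in the boundary case, and (ii) then follows from H\"older plus John--Nirenberg summed over annuli. The only differences are cosmetic: in (ii) you apply (i) directly with $q=s$, whereas the paper passes through the unnecessary intermediate exponent $q=(s+1)/2$; and your remark about $2^jB$ leaving $\X$ handles a point the paper leaves implicit.
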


	\begin{proof}	
		\eqref{item-Ra-offdiagonal} The case $j=0,1$ is trivial since $\mathfrak R$ is bounded on $L^q(\X)$. For $j\ge 2$, we consider the following two cases:
		
		\noindent {\sl Case 1:} $0<r_B\leq\frac{x_B^n}{4}$. Then, $\displaystyle \int_{\X} a(y)dy=0$. Thus, by H\"older's inequality, Lemma \ref{lem-Riesz-kernel} and the fact $\|a\|_{L^1}\le 1$, we have
		\begin{align}\label{eq-Ra-cancellative-atom}
			\|\mathfrak Ra\|_{L^q(S_j(B))}
			&\leq \int_B |a(y)|\Big(\int_{S_j(B)} |\mathfrak R(x,y)-\mathfrak R(x,x_B)|^q\,dx\Big)^{1/q}dy\nonumber\\
			&\lesssim \int_B |a(y)|\Big(\int_{S_j(B)} \Big[\f{|y-x_B|}{|x-y|^{n+1}}\Big]^q dx\Big)^{1/q}dy\nonumber\\
			&\lesssim \|a\|_{L^1}\Big(\int_{S_j(B)} \Big[\f{r_B}{(2^jr_B)^{n+1}}\Big]^q dx\Big)^{1/q}\nonumber\\
			&\lesssim 2^{-j} |2^j B|^{\frac{1}{q}-1}.
		\end{align}
		
		{\sl Case 2:} $\frac{x_B^n}{4}<r_B<\frac{x_B^n}{2}$. Then, by H\"older's inequality, Lemma \ref{lem-Riesz-kernel} and the fact $y^n\simeq r_B$ for $y\in B$,
		\begin{align}\label{eq-Ra-boundary-atom}
			\|\mathfrak R a\|_{L^q(S_j(B))}
			&\leq \int_B |a(y)|\Big(\int_{S_j(B)} |\mathfrak R(x,y)|^q\,dx\Big)^{1/q}dy\nonumber\\
			&\lesssim \int_B |a(y)|\Big(\int_{S_j(B)} \f{1}{|x-y|^{qn}}\Big[\f{r_B}{|x-y|}\Big]^q dx\Big)^{1/q}dy\nonumber\\
			&\lesssim 2^{-j}|2^jB|^{\frac1q-1}.
			\end{align}
		
		Taking \eqref{eq-Ra-cancellative-atom} and \eqref{eq-Ra-boundary-atom} into account, we complete the proof of (i).
		
		\medskip

		\eqref{item-bRa-L1} Let $q:=\frac{s+1}{2}\in(1,s)$. An $(H^1_r,s)$-atom is also an $(H^1_r,q)$-atom. Hence, by H\"older's inequality, \eqref{eq-classical-JN}, \eqref{eq-BMO-dilated-averages}, and part~(i),
		\begin{align*}
			&\|(b-b_B)\mathfrak Ra\|_{L^1}=  \sum_{j=0}^\infty \|(b-b_B)\mathfrak Ra\|_{L^1(S_j(B))}\\
			&\hskip2cm\leq \sum_{j=0}^\infty \|b-b_B\|_{L^{q'}(S_j(B))}\|\mathfrak Ra\|_{L^q(S_j(B))}\\
			&\hskip2cm\lesssim \sum_{j=0}^\infty |2^{j+1} B|^{\frac{1}{q'}}(j+2) \|b\|_{\mathrm{BMO}}\times  2^{-j}|2^j B|^{\frac{1}{q}-1}\\
			&\hskip2cm\lesssim  \|b\|_{\mathrm{BMO}}.
		\end{align*}
		This completes the proof.
	\end{proof}

	\section{Proofs of the main results}\label{sec-main-proof}

	We now give the proof of Theorem \ref{mainthm}.

	\begin{proof}[Proof of Theorem \ref{mainthm}]
(a) It is enough to prove the estimate for finite linear combinations of $(H^1_r,q)$-atoms, which form a dense subspace of $H^1_r(\X)$. Let
\[
f=\sum_{j=1}^{N}\lambda_j a_j,
\qquad
\sum_{j=1}^{N}|\lambda_j|\leq 2\|f\|_{H^1_r},
\]
where each $a_j$ is an $(H^1_r,q)$-atom associated with a ball $B_j$. Therefore,
		\begin{align*}
			\Big|[b,\mathfrak R](f)(x)\Big|&= \Big|\mathfrak R\Big(\Big(b(x)-b(\cdot)\Big)\sum_{j=1}^{N} \lambda_j a_j(\cdot)\Big)(x)\Big|\\
			&\leq \sum_{j=1}^{N} |\lambda_j|\,|b(x)-b_{B_j}|\,|\mathfrak R(a_j)(x)|
			+\Big|\mathfrak R\Big(\sum_{j=1}^{N} \lambda_j (b_{B_j}-b(\cdot))a_j(\cdot)\Big)(x)\Big|.
		\end{align*}
		This, together with the fact that $\mathfrak R$ is of weak type $(1,1)$ (since $\mathfrak{R}$ is a Calder\'on-Zygmund operator), implies
			\begin{align*}
			\|[b,\mathfrak R](f)\|_{L^{1,\infty}}&\lesssim \sum_{j=1}^{N} |\lambda_j| \|(b-b_{B_j})\mathfrak R(a_j)\|_{L^1} + \|\mathfrak R\|_{L^1\to L^{1,\infty}}\sum_{j=1}^{N} |\lambda_j|\|(b-b_{B_j})a_j\|_{L^1}.
		\end{align*}
By Lemma \ref{lem-Ra-estimates},
\[
\begin{aligned}
	\sum_{j=1}^{N} |\lambda_j| \|(b-b_{B_j})\mathfrak R(a_j)\|_{L^1}&\lesssim \|b\|_{\mathrm{BMO}(\X)}\sum_{j=1}^{N} |\lambda_j|\\
	&\lesssim  \|b\|_{\mathrm{BMO}(\X)} \|f\|_{H^1_r}.
\end{aligned}
\]
In addition, using H\"older's inequality and \eqref{eq-classical-JN}, we have
\[
\begin{aligned}
	\|(b-b_{B_j})a_j\|_{L^1}&\le \|b-b_{B_j}\|_{L^{q'}(B_j)}\|a_j\|_{L^q}\\
	&\lesssim \|b\|_{\mathrm{BMO}(\X)},
\end{aligned}
\]
which implies
\[
\begin{aligned}
	\sum_{j=1}^{N} |\lambda_j|\|(b-b_{B_j})a_j\|_{L^1}&\lesssim \|b\|_{\mathrm{BMO}(\X)}\sum_{j=1}^{N} |\lambda_j|\\
	&\lesssim  \|b\|_{\mathrm{BMO}(\X)} \|f\|_{H^1_r}.
\end{aligned}
\]
Consequently,
\[
\|[b,\mathfrak R](f)\|_{L^{1,\infty}}\lesssim \|b\|_{\mathrm{BMO}(\X)} \|f\|_{H^1_r}.
\]		
The bound is independent of $N$, so density yields the asserted extension to all of $H^1_r(\X)$. This completes the proof of (a). 

\bigskip

(b) Assume first that $b\in\mathrm{BMO}^{\log}(\X)$. Fix $q\in(1,\infty)$. Since $\mathrm{BMO}^{\log}(\X)\subset\mathrm{BMO}(\X)$, the commutator is bounded on $L^2(\X)$; hence the standard atomic extension criterion reduces the proof to showing that
		\begin{equation}\label{eq-commutator-atom-bound}
			\|[b,\mathfrak{R}](a)\|_{H^1_r}\lesssim \|b\|_{\mathrm{BMO}^{\log}(\X)}
		\end{equation}
		for all $(H^1_r,q)$-atoms $a$ associated with balls $B$. 
		
		Let $p:=\frac{q+1}{2}\in(1,q)$ and choose $s\in(1,\infty)$ so that $\frac{1}{p}=\frac{1}{s}+\frac{1}{q}$. For any $j\ge 0$, by H\"older's inequality,  Lemma \ref{lem-Ra-estimates}, \eqref{eq-classical-JN} and \eqref{eq-BMO-dilated-averages}, we obtain
		\begin{align}\label{eq-bRa-molecule-size}
			&\|(b-b_B)\mathfrak{R}a\|_{L^p(S_j(B))}\leq \|b-b_B\|_{L^s(S_j(B))} \|\mathfrak{R}a\|_{L^q(S_j(B))}\nonumber\\
			&\hskip1cm\lesssim |2^{j+1} B|^{\frac{1}{s}}(j+1)  \|b\|_{\mathrm{BMO}(\X)}\times  2^{-j}|2^j B|^{\frac{1}{q}-1}\nonumber\\
			&\hskip1cm\lesssim 2^{-j/2} |2^j B|^{\frac{1}{p}-1} \|b\|_{\mathrm{BMO}^{\log}(\X)},
		\end{align}
		where we used $(j+1)2^{-j}\lesssim 2^{-j/2}$. 
		
		On the other hand, by H\"older's inequality, Lemma~\ref{lem-Ra-estimates}\eqref{item-Ra-offdiagonal}, and Lemma~\ref{lem-bmolog-dilated-ball}, we obtain
		\begin{align*}
			&\Big|\int_{\X} (b(x)-b_B)\mathfrak{R}a(x)dx\Big|\leq \sum_{j=0}^\infty \int_{S_j(B)} |b(x)-b_B| |\mathfrak{R}a(x)|dx\\
			&\hskip1cm\leq \sum_{j=0}^\infty \|b-b_B\|_{L^{q'}(S_j(B))} \|\mathfrak{R}a\|_{L^q(S_j(B))}\\
			&\hskip1cm\lesssim \sum_{j=0}^\infty  |2^{j+1} B|^{\frac{1}{q'}}(j+2) \frac{1}{\log\Big(e+\frac{x_B^n}{2^{j+1} r_B}\Big)} \|b\|_{\mathrm{BMO}^{\log}(\X)}\times  2^{-j}|2^j B|^{\frac{1}{q}-1}\\
			&\hskip1cm\lesssim  \frac{\|b\|_{\mathrm{BMO}^{\log}(\X)}}{\log\Big(e+\frac{x_B^n}{r_B}\Big)},
		\end{align*}
		where in the last inequality we used the following inequality
		\[
		\sum_{j=0}^\infty(j+2)2^{-j}\frac{1}{\log\Big(e+\frac{x_B^n}{2^{j+1} r_B}\Big)}\lesssim \frac{1}{\log\Big(e+\frac{x_B^n}{r_B}\Big)}.
		\]
This elementary bound follows, for example, by setting $A=x_B^n/r_B$ and splitting the sum at $j=\lfloor \frac12\log_2 A\rfloor$ (the case of bounded $A$ being immediate).
		Together with \eqref{eq-bRa-molecule-size}, this shows that $(b-b_B)\mathfrak R a$ is a constant multiple of a $(p,1/2)$-molecule of log-type. Hence Lemma~\ref{lem-log-molecule} gives
		\[
		\|(b-b_B)\mathfrak R a\|_{H^1_r}\lesssim \|b\|_{\mathrm{BMO}^{\log}(\X)}.
		\]
		Thus, by Lemma \ref{lem-bmolog-characterization} and the $H^1_r$-boundedness of $\mathfrak{R}$ (Proposition \ref{prop-R-Hr-bounded}),
		\begin{align*}
			\|[b,\mathfrak{R}](a)\|_{H^1_r}&\leq \|(b-b_B)\mathfrak{R}a\|_{H^1_r}+ \|\mathfrak{R}(a(b-b_B))\|_{H^1_r}\\
			&\lesssim \|b\|_{\mathrm{BMO}^{\log}(\X)} + \|\mathfrak{R}\|_{H^1_r\to H^1_r} \|a(b-b_B)\|_{H^1_r} \lesssim \|b\|_{\mathrm{BMO}^{\log}(\X)}.
		\end{align*}
		This proves \eqref{eq-commutator-atom-bound}, and thus completes the proof.
		
		\medskip

		Conversely, assume that $b\in\mathrm{BMO}(\X)$ and that $[b,\mathfrak R]$ is bounded on $H^1_r(\X)$. Since $H^1_r(\X)$ admits a characterization in terms of the singular integral operators $ \mathfrak{R}_1,\ldots, \mathfrak{R}_n$, we have
		\[H^1_r(\X)=\Big\{f\in L^1(\X): \mathfrak{R}_1(f),\mathfrak{R}_2(f),\ldots,\mathfrak{R}_n(f)\in L^1(\X)\Big\}\]
		and
		\begin{equation}\label{eq-riesz-Hr-characterization}
			\|f\|_{H^1_r}\simeq \|f\|_{L^1}+\sum_{j=1}^{n} \|\mathfrak{R}_j(f)\|_{L^1}\quad\text{for all } f\in H^1_r(\X).
		\end{equation}   
		
		For every $1\leq j\leq n$ and every $(H^1_r,s)$-atom $a$ associated with a ball $B$, it follows from Lemma \ref{lem-Ra-estimates}\eqref{item-bRa-L1} and the fact that the commutator $[b,\mathfrak R_j]$ is bounded on $H^1_r(\X)$  that
		\begin{align}\label{eq-necessity-riesz-component}
			\Big\|\mathfrak{R}_j\Big((b-b_B)a\Big)\Big\|_{L^1}
			&\leq \|(b-b_B)\mathfrak{R}_j(a)\|_{L^1}
			+ \|[b,\mathfrak R_j](a)\|_{L^1}\nonumber\\
			&\lesssim \|b\|_{\mathrm{BMO}(\X)}
			+ \|[b,\mathfrak{R}_j]\|_{H^1_r\to L^1}\|a\|_{H^1_r}\nonumber\\
			&\lesssim \|b\|_{\mathrm{BMO}(\X)}
			+ \|[b,\mathfrak{R}_j]\|_{H^1_r\to H^1_r}.
		\end{align}
	Moreover, by H\"older's inequality and the John--Nirenberg inequality, we have 
	\begin{equation}\label{eq-product-L1}
		\|(b-b_B)a\|_{L^1}\lesssim \|b\|_{\mathrm{BMO}(\X)}.
	\end{equation}

	Combining \eqref{eq-riesz-Hr-characterization}, \eqref{eq-necessity-riesz-component}, \eqref{eq-product-L1}, we obtain
	\[
	\begin{aligned}
		\|(b-b_B)a\|_{H^1_r}&\simeq  \|(b-b_B)a\|_{L^1} +\sum_{j=1}^n\Big\|\mathfrak{R}_j\Big((b-b_B)a\Big)\Big\|_{L^1}\\
		&\lesssim  \|b\|_{\mathrm{BMO}(\X)} +\sum_{j=1}^n\|[b,\mathfrak{R}_j]\|_{H^1_r\to H^1_r}.
	\end{aligned}
	\]
	
	Applying Lemma~\ref{lem-bmolog-characterization} with atom exponent $s$ gives the necessity estimate
\[
\|b\|_{\mathrm{BMO}^{\log}(\X)}
\lesssim \|b\|_{\mathrm{BMO}(\X)}+\sum_{j=1}^{n}\|[b,\mathfrak R_j]\|_{H^1_r\to H^1_r}.
\]
Together with the sufficiency estimate proved above and the embedding
$\mathrm{BMO}^{\log}(\X)\subset\mathrm{BMO}(\X)$, this yields the quantitative norm equivalence in Theorem~\ref{mainthm}.
		This completes the proof.
		
	\end{proof}

\section{Appendix}
This section is devoted to the proof of Lemma \ref{lem-gx0}.
\begin{proof}[Proof of Lemma  \ref{lem-gx0}.] Recall that 
		\begin{equation*} 
		g_{x_0}(x)=\max\Big\{0, 1+ \log\frac{x_0^n}{|x-x_0|}\Big\}.
	\end{equation*}
	It suffices to prove that 
\begin{equation}\label{eq-gx0-bmo}
\|g_{x_0}\|_{\mathrm{BMO}(\X)} \le C
\end{equation}
and
\begin{equation}\label{eq-gx0-boundary-average}
\sup_{\substack{B:\,\text{ball}\\ r_B\ge x_B^n/4}}\fint_B |g_{x_0}| \le C,
\end{equation}
where the constant $C$ is independent  of $x_0$.

To prove \eqref{eq-gx0-bmo}, note that the positive-part map is $1$-Lipschitz. Thus it suffices to show, uniformly in $x_0$, that $\log|\cdot-x_0|\in\mathrm{BMO}(\X)$. For this, it is enough to prove
\[
\sup_{B \subset \X} \inf_{c\in \R} \fint_B |\log |y-x_0| - c|\,dy \lesssim 1,
\]
where the supremum is taken over all balls $B\subset\X$.

It suffices to show that
\begin{equation} 
	\fint_B |\log |y-x_0| - c_B| \, dy \lesssim 1,
\end{equation}
for all balls $B\subset \X$ with 
\[
c_B = 
\begin{cases}
	\log |x_B-x_0|, & |x_B-x_0| \ge 2r_B, \\
	\log r_B, & |x_B-x_0| < 2r_B.
\end{cases}
\]  
If $|x_B-x_0| \ge 2r_B$, then $|x_B-x_0|/2\le |y-x_0|\le 2|x_B-x_0|$. Consequently,  
\[
\begin{aligned}
	\fint_B |\log |y-x_0| - c_B| dy &=\fint_B \Big|\log \Big(\f{|y-x_0|}{|x_B-x_0|}\Big)\Big|dy\\
	&\lesssim 1.
\end{aligned}
\]
If $|x_B-x_0| < 2r_B$, then $|y-x_0|\le 3r_B$ and $B\subset B(x_0,4r_B)$. Therefore,  
\[
\begin{aligned}
	\fint_B |\log |y-x_0| - c_B|\,dy
&\lesssim \frac{1}{r_B^n}\int_{B(x_0,3r_B)}
\Big|\log \Big(\f{|y-x_0|}{r_B}\Big)\Big|\,dy\\
	&\lesssim 1.
\end{aligned}
\]
This ensures \eqref{eq-gx0-bmo}.

\medskip
We now prove \eqref{eq-gx0-boundary-average} directly. By scaling,
\begin{equation}\label{eq-meanvalue-B0}
\fint_{B(x_0,x_0^n)} g_{x_0}(y)\,dy\lesssim 1.
\end{equation}
Indeed, after the change of variables $y=x_0+x_0^n z$, the integral reduces to an absolute constant depending only on the dimension.

Let $B$ be a ball with $r_B\ge x_B^n/4$. We write
\[
\fint_{B} g_{x_0}(y)\, dy \le \f{1}{|B|}\int_{B\cap B(x_0,x_0^n/3)}  g_{x_0} (y)\, dy + \f{1}{|B|}\int_{B\setminus B(x_0,x_0^n/3)}  g_{x_0} (y)\, dy. 
\]
Since $g_{x_0}(y)\le \max\{0,1+ \log 3\}$ for $B\setminus B(x_0,x_0^n/3)$, we have
\[
\f{1}{|B|}\int_{B\setminus B(x_0,x_0^n/3)}  g_{x_0} (y)\, dy\lesssim 1.
\]

If $B\cap B(x_0,x_0^n/3)\ne \emptyset$, choose $y\in B\cap B(x_0,x_0^n/3)$. Since $x_B^n\le 4r_B$, we have
\[
x_0^n\le x_B^n+|x_B-y|+|y-x_0|\le 5r_B+\frac{x_0^n}{3},
\]
and hence $r_B\gtrsim x_0^n$. Consequently,
\[
\begin{aligned}
	\f{1}{|B|}\int_{B\cap B(x_0,x_0^n/3)}  g_{x_0} (y)\, dy&\le \f{|B(x_0,x_0^n/3)|}{|B|}\fint_{B(x_0,x_0^n/3)}  g_{x_0} (y)\, dy\\
	&\lesssim \fint_{B(x_0,x_0^n)}  g_{x_0} (y)\, dy\\
	&\lesssim 1.
\end{aligned}
\]
where in the last inequality we used \eqref{eq-meanvalue-B0}.

This completes the proof of \eqref{eq-gx0-boundary-average} and hence completes the proof of the lemma.
\end{proof}

\begin{acknowledgement}
	The Anh Bui and Xuan Thinh Duong were supported by Australian Research Council grant DP260101083. Luong Dang Ky was supported by the Vietnam Ministry of Education and Training (through the National Key Program for the Development of Mathematics in the 2021--2030 period) under Grant No. B2026-CTT-01.
\end{acknowledgement}


\begin{thebibliography}{99}

		
		\bibitem{BDK} T. A. Bui, X. T. Duong, and F. K. Ly, Maximal function characterizations for Hardy spaces on spaces of homogeneous type with finite measure and applications. J. Funct. Anal. 278 (2020), 108423.
		
		
		
		\bibitem{BLL}  T. A. Bui, J. Li and F. K. Ly, $T1$ criteria for generalised Calder\'on--Zygmund type operators on Hardy and $\mathrm{BMO}$ spaces associated to Schr\"odinger operators and applications. Ann. Sc. Norm. Super. Pisa Cl. Sci. (5) 18 (2018), no. 1, 203--239.
		
		\bibitem{CY} M. Cao and K. Yabuta, Spaces associated with Neumann Laplacian. J. Geom. Anal. 32 (2022), Article No. 59.
		

		\bibitem{CRW} R. R. Coifman, R. Rochberg, and G. Weiss,
		Factorization theorems for Hardy spaces in several variables.
		Ann. of Math. (2) 103 (1976), no. 3, 611--635.


		
		%
		%


		
		 
		\bibitem{Ky13} L. D. Ky, Bilinear decompositions and commutators of singular integral operators. Trans. Amer. Math. Soc. 365 (2013), no. 6, 2931--2958.		
		
		\bibitem{Ky15} L. D. Ky,  Endpoint estimates for commutators of singular integrals related to Schr\"odinger operators. Rev. Mat. Iberoam. 31 (2015), no. 4, 1333--1373.
		
		\bibitem{Ky25} L. D. Ky, Generalized Calder\'on--Zygmund operators on the Hardy space $H^1_r(\X)$. Banach J. Math. Anal. 19 (2025), no. 2, Paper No. 20.
		

		

		

		\bibitem{Mi} A. Miyachi, $H^p$ space over open subsets of $\mathbb R^n$. Studia Math. 95 (1990), 205--228.
		

		

		
		\bibitem{Pe} C. P\'erez, Endpoint estimates for commutators of singular integral operators. J. Funct. Anal. 128 (1995), no. 1, 163--185.
		

		

		
		\bibitem{YYZ} Da. Yang,  Do. Yang and  Y. Zhou,  Localized Morrey-Campanato spaces on metric measure spaces and applications to Schr\"odinger operators. Nagoya Math. J. 198 (2010), 77--119.
		
		\bibitem{YZ} D. Yang and Y. Zhou, Localized Hardy spaces $H^1$ related to admissible functions on RD-spaces and applications to Schr\"odinger operators. Trans. Amer. Math. Soc. 363 (2011), no. 3, 1197--1239.
		
		
	\end{thebibliography}
\end{document}